\documentclass{article}
\usepackage{graphicx}
\usepackage{physics}
\usepackage{caption}
\usepackage{subcaption}
\usepackage[export]{adjustbox}
\usepackage{amsmath,bm}
\usepackage{amssymb}
\usepackage{mathtools}
\usepackage{xparse}
\usepackage{todonotes}
\usepackage{amsthm}
\usepackage{soul}
\usepackage{amsmath}
\DeclarePairedDelimiterX{\inp}[2]{\langle}{\rangle}{#1, #2} 
\usepackage{acronym}
\usepackage[capitalise]{cleveref}

\newcommand{\REPHRASE}[1]{{\todo[inline, color=yellow!40]{REPHRASE: #1}}}
\newcommand{\inote}[1]{{\todo[inline, color=blue!40]{Ido: #1}}}

\newcommand{\OFF}[1]{}

\newtheorem{theorem}{Theorem}[section]
\newtheorem{lemma}[theorem]{Lemma}
\newtheorem{corollary}[theorem]{Corollary}
\newtheorem{definition}[theorem]{Definition}
\newtheorem{proposition}[theorem]{Proposition}

\newtheorem{remark}[theorem]{Remark}

\title{Functional Dimensionality of Koopman Eigenfunction Space}
\author{Ido Cohen$^1$ \\ idocoh@ariel.ac.il  \and Eli Appleboim$^2$ \\ eliap@ee.technion.ac.il \and Gershon Wolansky$^3$ \\ gershonw@math.technion.ac.il}
\date{%
    $^1$Electrical and Electronics Engineering, Ariel University, Ariel\\%
    $^2$Electrical and Computer Engineering, Technion, Haifa\\
    $^3$Mathematics, Technion, Haifa\\[2ex]%
    \today
}

\begin{document}
%

\maketitle
\begin{abstract}
This work presents the general form solution of \acl{KPDE} and shows that its functional dimensionality is finite. The dimensionality is as the dimensionality of the dynamics. Thus, the representation of nonlinear dynamics as a linear one with a finite set of Koopman eigenfunctions without error is possible. This formulation justifies the flowbox statement and provides a simple numerical method to find such representation.
\end{abstract}

{\bf{Keywords:}} Koopman operator, Koopman Partial Differential Equation, Dynamical systems, Flowbox, Conservation laws

\section{Introduction}
The Koopman spectrum is a commonly used tool for dynamical system analysis. Treating Koopman Eigenfunction space as an infinite dimensional vector space yields various techniques to represent the system as a linear one with truncated dimensionality \cite{schmid2010dynamic, brunton2016sparse, kaiser2018discovering, servadio2023koopman, schmid2022dynamic}. Naturally, these methods occasionally result in an overly redundant spectral decomposition \cite{mezic2005spectral, williams2015data}, which is often inaccurate \cite{cohen2021modes, cohen_gilboa_2023}. Thus, the challenge of extracting meaningful information about the dynamics from samples remains open \cite{avila2020data}. In this study, we present the general solution of \ac{KPDE} and show that only $N$ Koopman Eigenfunctions are required for perfectly representing a nonlinear dynamic as a linear one.

Regrettably, the mathematical framework of the Koopman spectrum has received little attention, resulting in a limited understanding of how to efficiently extract a representation based on the underlying geometry of this space from samples \cite{bollt2021geometric}. This lack of knowledge has led unsophisticated and exhaustive algorithms \cite{brunton2022data, williams2015data, li2017extended} that led to intrinsic flaws in dynamical representation and prediction \cite{wu2021challenges}, for example, in highly nonlinear time-variant systems \cite{turjeman2022underlying}, homogeneous flows \cite{cohen2021modes}, or even linear systems with non-zero inputs \cite{lu2021extended}. These drawbacks led to ad-hok adaptations to almost every application \cite{schmid2022dynamic}. This variety of adaptations challenges the liability of the algorithms due to their lack of stability \cite{wynn2013optimal}, accuracy \cite{hemati2017biasing, dawson2016characterizing}, and of robustness to noise \cite{kutz2016dynamic, bagheri2013effects, nonomura2018dynamic, nonomura2019extended, azencot2019consistent}.
This study aims to bridge this knowledge gap by presenting a comprehensive theory on the general form of the \ac{KPDE}.
\paragraph{Main contributions}

\begin{itemize}
    \item The general form solution of \ac{KEF} is presented.
    \item The general solution of \ac{KPDE} is formulated.
    \item A minimal set of Koopman eigenfunction space \cite{cohen2023minimal} is naturally stemmed from \ac{KPDE}'s solution.
    \item A flowbox representation and  \ac{KPDE}'s solution overlap each other.
    \item The functional dimensionality is as the dimensionality of the dynamics.
\end{itemize}

\paragraph{Paper Outlines}
The plan of this paper is as follows. The general solution of \ac{KPDE} is formulated based on the characteristics method. Consequently, the functional dimensionality of \ac{KEF} space is proven to be finite. Namely, we need only $N$ Koopman eigenfunction to represent a dynamical system as a linear one. At last, we show a direct connection between the minimal set concept, flowbox theorem, and conservation laws of dynamical systems. We precede with preliminary definitions and identities.

\section{Preliminaries}
We list below essential notations and definitions that are used in this paper.
\begin{description}
    \item[Dynamic]  Let us consider the following nonlinear dynamical system, defined in a domain $\mathcal{D}$ in $\mathbb{R}^n$
\begin{equation}\label{eq:dynamics}
    \dot{\bm{x}}=P(\bm{x}), \,\, t\in I=[0,T]
\end{equation}
where $\bm{x}\in \mathbb{R}^N$, the operator $\,\dot{\,\,}\,$ denotes the time derivative, and $P:\mathbb{R}^N\to \mathbb{R}^N$. All along this work it is assumed that $P$ is $C^1$ and therefore $\bm{x}(t)\in C^2$.

\OFF{\item[Orbit of an initial point] Given an initial condition, $\bm{x}(t=0)= \bm{x}_0$, the unique solution of \eqref{eq:dynamics} can be seen as a curve in $\mathbb{R}^N$. This trajectory is denoted by $\mathcal{X}(\bm{x}_0)$, and termed as the \emph{orbit of $\bm{x}_0$}.}

\item[Equilibrium] An equilibrium point, denoted by $\bm{x}^*\in \mathcal{D}$, is a stationary point of Eq.  \eqref{eq:dynamics}, i.e. a point at which
\begin{equation}
    P(\bm{x}^*)=\bm{0}
\end{equation}
where $\bm{0}$ is the $N$ dimensional zero vector.

\item[Measurement] A measurement is a function from $\mathcal{D}$ to $\mathbb{C}$. 

\item[Koopman Operator] The Koopman operator $K_\tau$ acts on the infinite dimensional vector space of measurements and admits the following.  Let $g(\bm{x})$ be a measurement then
\begin{equation}\label{eq:koopDef}\tag{\bf{KO}}
    K_\tau(g(\bm{x}(s)))=g(\bm{x}(s+\tau)),\quad s,s+\tau\in I,
\end{equation}
where $\tau>0$. This operator is linear \cite{koopman1931hamiltonian,mezic2005spectral}.
\item[\acl{KEF}] Assuming the initial condition $\bm{x}_0$, a measurement $\varphi(\bm{x})$, satisfying the following relation along the orbit $\mathcal{X}(\bm{x}_0)$
\begin{equation}\label{eq:KEFdiff}
    \dfrac{d\varphi (\bm{x})}{dt}=\lambda \varphi(\bm{x}), \quad \forall \bm{x}\in \mathcal{X}(\bm{x}_0)
\end{equation}
for some value $\lambda\in \mathbb{C}$, is a \acf{KEF}.
\item[Koopman \acs{PDE}] Let $\Phi(\bm{x})$ be some differentiable measurement. Then $\Phi$ is a solution of the Koopman \ac{PDE} if it satisfies the following, everywhere in $\mathcal{D}$,
\begin{equation}\label{eq:KEFPDE}
    \nabla \Phi (\bm{x})^T P(\bm{x}) = \lambda \Phi(\bm{x}), \quad \forall \bm{x}\in \mathcal{D}. 
\end{equation}
where $\nabla$ denotes the gradient of $\Phi$ with respect to the state vector $\bm{x}$. In particular, it is assumed that $\Phi$ is $C^1$ as a function of $\bm{x}$. 

\item[Conservation Law] The function $h:\mathcal{D}\to \mathbb{R}$ is a conservation law if it is a (non constant) solution to \cref{eq:KEFPDE} associated with $\lambda =0$.

\item[Unit Measurement] A unit velocity measurement is a smooth function $m:\mathcal{D}\subset \mathbb{R}^N \to \mathbb{C}$  satisfying the following \ac{PDE} \footnote{This function is denoted with $m$ as a shortage for $\mu o v \acute\alpha \delta \alpha$ unit in Greek.},
\begin{equation}\label{eq:unitPDE}
    \nabla m(\bm{x})^T P(\bm{x})=1,\quad \forall \bm{x}\in \mathcal{D}.
\end{equation}
The change rate of the unit measurement, dictated by the dynamics, is given by
\begin{equation}\label{unit}
    \frac{d}{dt}m(\bm{x})=\nabla m(\bm{x})^T P(\bm{x})=1, 
\end{equation}
which is the source of its name.

\end{description}

\section{Solution of \ac{KPDE}, Minimal Set, and Flowbox}\label{sec:theoreticPart}

\subsection{General Solution of \ac{KPDE}}
Let $S$ be an open, $N-1$ dimensional hyper surface  embedded in $\mathbb{R}^N$. We say that $S$ is {\em non-recurrent} with respect to $P$ if any solution orbit  of $P$ intersect $S$ {\em at  most} once. 
\subsubsection{Examples}\label{examples}
Let $N=2$
\begin{description}
    \item{[a]} $P:= (x_1, x_2)$.

 We can choose $S$ to be any circle $x_1^2+x_2^2=R>0$, excluding one point $(x_1^0, x_2^0)$ on this circle. 
Then $\Omega(S)$ is the entire  plan $\mathbb{R}^2$, excluding the orbit passing trough this point.
\item{[b]} $P:= (-x_1, x_2)$.

We can define $S$ to be  $x_1=1$.   In that case $\Omega(S)$ is the half plain $x_1>0$. 
\item{[c]} If  $P:= (x_2, -x_1)$.  Then all orbits are composed of the circles  $x_1^2+x_2^2=R$, so any point on any surface is recurrent.  
\end{description}

Let $\Omega(S)\subset \mathbb{R}^N$  be the set of all points on the solution orbits intersecting $S$.
The characteristic method for Cauchy  Problem applied to  linear, first order PDE implies (c.f. Courant \& Hilbert \cite{hilbert1985methods}) 
\begin{theorem}
   Given a function $g$ on $S$,  the equation 
    $$ \nabla\Psi^T(x) \cdot P(x)=h(x)$$
    has a unique solution in the domain $\Omega(S)$
satisfying $\Psi=g$ on $S$.
\end{theorem}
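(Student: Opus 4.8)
The plan is to use the method of characteristics for a first-order linear PDE, exactly as in Courant–Hilbert. First I would set up the characteristic ODEs. Since the PDE is $\nabla\Psi(x)^T P(x) = h(x)$, along a curve $x(t)$ solving $\dot x = P(x)$ we have $\frac{d}{dt}\Psi(x(t)) = \nabla\Psi(x(t))^T \dot x(t) = \nabla\Psi(x(t))^T P(x(t)) = h(x(t))$. So the PDE becomes the ODE $\frac{d}{dt}\Psi(x(t)) = h(x(t))$ along each integral curve of $P$. This is the key reduction: the PDE in $N$ variables collapses to a family of scalar ODEs parametrized by the initial point on $S$.

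Next I would construct the solution explicitly. Since $S$ is non-recurrent and $\Omega(S)$ is by definition the union of all orbits meeting $S$, every point $y \in \Omega(S)$ lies on a unique orbit which crosses $S$ at a unique point $\xi(y) \in S$, at a unique ``time offset'' $t(y)$ (the time it takes to flow from $\xi(y)$ to $y$ along $P$). Here I would invoke the existence–uniqueness theorem for ODEs (valid since $P \in C^1$) to guarantee the orbit through $y$ is well-defined, and non-recurrence to guarantee $\xi(y)$ and $t(y)$ are unambiguous; smooth dependence on initial conditions plus (implicitly) a transversality assumption that $P$ is not tangent to $S$ gives that $\xi$ and $t$ are $C^1$ on $\Omega(S)$. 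Then I would define
\begin{equation*}
    \Psi(y) := g(\xi(y)) + \int_0^{t(y)} h\big(x(s;\xi(y))\big)\, ds,
\end{equation*}
where $x(s;\xi)$ denotes the flow of $P$ started at $\xi \in S$. One checks that $\Psi = g$ on $S$ (there $t(y)=0$) and that $\Psi$ satisfies the PDE: restricting $\Psi$ to the orbit through any point and differentiating in $t$ recovers $\frac{d}{dt}\Psi = h$, which is precisely $\nabla\Psi^T P = h$.

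For uniqueness I would argue that any solution $\tilde\Psi$ of the PDE with $\tilde\Psi = g$ on $S$ must, along each orbit, satisfy the same scalar ODE $\frac{d}{dt}\tilde\Psi(x(t)) = h(x(t))$ with the same initial value $g(\xi)$ at the crossing point; by uniqueness of solutions to this scalar ODE, $\tilde\Psi$ agrees with $\Psi$ on every orbit, hence on all of $\Omega(S)$. The main obstacle, and the point I would be most careful about, is the regularity and well-definedness of the change of variables $y \mapsto (\xi(y), t(y))$: this is where non-recurrence is essential (otherwise $\xi(y)$ is multivalued), and where one silently needs transversality of $P$ to $S$ and smooth dependence on initial data to conclude $\Psi \in C^1$. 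Everything else is the standard characteristics computation and a Gronwall-type uniqueness argument for the induced ODE.
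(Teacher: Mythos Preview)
Your proposal is correct and is precisely the standard method-of-characteristics argument for the Cauchy problem for a linear first-order PDE. The paper does not give its own proof of this theorem at all: it simply cites Courant--Hilbert and states the result, then uses it as a black box to construct the unit measurement $m$ and the invariants $h_i$. So there is nothing to compare at the level of argument---you have supplied the proof that the paper outsources to the reference, and your reduction to the scalar ODE $\frac{d}{dt}\Psi(x(t))=h(x(t))$ along orbits, explicit integral formula, and orbit-wise uniqueness are exactly the Courant--Hilbert route.

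One minor remark: you correctly flag transversality of $P$ to $S$ as an implicit hypothesis needed for the change of variables $y\mapsto(\xi(y),t(y))$ to be $C^1$. The paper is silent on this point as well (it only assumes $S$ is non-recurrent), so your caveat is appropriate rather than a gap in your argument.
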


Using this Theorem for $g=0$ and $h=1$, we now define $m$ to be  the {\em unique} solution of (\ref{eq:unitPDE}) on $\Omega(S)$ satisfying $m(x)\equiv 0$ on $S$. 
\begin{remark}
The existence of such unit measurement $m$ on $\Omega(S)$ follows from the assumption that $S$ is non-recurrent.
\end{remark}

To formulate the general solution of Koopman PDE, we first state the following Lemma.
\begin{lemma}\label{Lem:UnitVelocityKEF}
    
    Any $\lambda-$Koopman eigenfunction on $\Omega(S)$ is of the form $h(\bm{x})e^{\lambda m(\bm{x})}$ where $h$ is an invariant of $P$, on $\Omega(S)$, namely
    \begin{equation}
        \nabla^T h(\bm{x})\cdot P(\bm{x})=0 \ \ \forall x\in\Omega(S)
    \end{equation} 
\end{lemma}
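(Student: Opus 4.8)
The plan is to obtain the factorization by dividing $\Phi$ by the exponential factor $e^{\lambda m}$ and checking, through the product rule, that the quotient is an invariant of $P$. I would begin by recording that, along any orbit of \eqref{eq:dynamics}, the chain rule gives $\dfrac{d}{dt}\Phi(\bm{x}) = \nabla\Phi(\bm{x})^T\dot{\bm{x}} = \nabla\Phi(\bm{x})^T P(\bm{x})$, so the defining condition \eqref{eq:KEFdiff} for a $\lambda$-Koopman eigenfunction on $\Omega(S)$ is exactly the Koopman PDE \eqref{eq:KEFPDE} restricted to $\Omega(S)$. Throughout, $m$ denotes the unit measurement on $\Omega(S)$ fixed above, which exists and is $C^1$ there precisely because $S$ is non-recurrent (the remark preceding this lemma), and $e^{\lambda m(\bm{x})}$ is smooth and nowhere zero, even for complex $\lambda$.

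Step one: given a $\lambda$-Koopman eigenfunction $\Phi$ on $\Omega(S)$, I would define $h(\bm{x}) := \Phi(\bm{x})\,e^{-\lambda m(\bm{x})}$. Since $\Phi$ and $m$ are $C^1$ on $\Omega(S)$ and the exponential is nonvanishing, $h$ is a well-defined $C^1$ function on $\Omega(S)$, and by construction $\Phi(\bm{x}) = h(\bm{x})\,e^{\lambda m(\bm{x})}$.

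Step two: differentiate $h$ along $P$. By the product rule,
\[
\nabla h(\bm{x})^T P(\bm{x}) = e^{-\lambda m(\bm{x})}\,\nabla\Phi(\bm{x})^T P(\bm{x}) - \lambda\,\Phi(\bm{x})\,e^{-\lambda m(\bm{x})}\,\nabla m(\bm{x})^T P(\bm{x}).
\]
Substituting $\nabla\Phi^T P = \lambda\Phi$ from \eqref{eq:KEFPDE} and $\nabla m^T P = 1$ from \eqref{eq:unitPDE}, the right-hand side equals $\lambda\Phi e^{-\lambda m} - \lambda\Phi e^{-\lambda m} = 0$. Hence $\nabla h(\bm{x})^T P(\bm{x}) = 0$ everywhere on $\Omega(S)$, i.e. $h$ is an invariant of $P$ on $\Omega(S)$, which is the assertion. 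For completeness I would also note the converse: if $h$ is any $C^1$ invariant of $P$ on $\Omega(S)$, the same computation run in reverse shows that $\Phi := h\,e^{\lambda m}$ satisfies $\nabla\Phi^T P = \lambda\Phi$, so the description of $\lambda$-Koopman eigenfunctions on $\Omega(S)$ is exact.

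I do not anticipate a genuine obstacle: the core of the proof is the one-line cancellation above. The only points needing care are bookkeeping rather than depth — namely that $m$ is actually defined on all of $\Omega(S)$ with sufficient regularity (this is where non-recurrence of $S$ and the characteristic-method theorem stated above are invoked), that $e^{\pm\lambda m}$ is nowhere zero so multiplying and dividing by it is legitimate for complex $\lambda$ as well, and the identification of the two equivalent formulations \eqref{eq:KEFdiff} and \eqref{eq:KEFPDE} of a Koopman eigenfunction.
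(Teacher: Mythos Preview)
Your proposal is correct and follows essentially the same approach as the paper: define $h=\Phi e^{-\lambda m}$, apply the product rule to $\nabla^T h\cdot P$, and use \eqref{eq:KEFPDE} together with \eqref{eq:unitPDE} to see the two terms cancel. Your write-up is in fact more careful than the paper's, since you explicitly note the regularity and non-vanishing of $e^{\lambda m}$ and record the trivial converse.
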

\begin{proof}
Let $\Phi(\bm{x})$ be a $\lambda$-Koopman eigenfunction. Then 

\begin{equation}
    \begin{split}
        &\nabla^T\left(\Phi(\bm{x}) e^{-\lambda m(\bm{x})}\right)\cdot P=\\
        &\qquad e^{-\lambda m(\bm{x})} \underbrace{\nabla^T \Phi(\bm{x})\cdot P}_{=\lambda \Phi(\bm{x})}-e^{-\lambda m(\bm{x})}\Phi(\bm{x}) \lambda \underbrace{\nabla^Tm(\bm{x})\cdot P}_{=1}=0
    \end{split}
\end{equation}
using (\cref{eq:KEFPDE}, \cref{unit}). 
\end{proof}
\begin{theorem}[General solution of \acs{KPDE}] \label{theo:generalSol}
There exists $N-1$ invariant functions $h_i:\Omega(S)\rightarrow (0,1)$  such that any Koopman eigenfunction defined on $\Omega(S)$ is of the form 
\begin{equation}\label{eq:generalSolutionKPDE}
    \Phi(\bm{x})=f(h_1(\bm{x}),\cdots,h_{N-1}(\bm{x}))e^{\lambda M(\bm{x})}
\end{equation}
where $f$ is any differentiable function on $(0,1)^{N-1}$. 
\end{theorem}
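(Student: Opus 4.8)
The plan is to build the $N-1$ invariants $h_i$ explicitly from a parametrization of the non-recurrent surface $S$, and then combine this with \Cref{Lem:UnitVelocityKEF} to get the stated form. First I would fix a coordinate chart on $S$: since $S$ is an open $N-1$ dimensional hypersurface, choose (shrinking $S$ if necessary, or working locally) a diffeomorphism $\sigma:(0,1)^{N-1}\to S$, and write its inverse components as $g_1,\dots,g_{N-1}:S\to(0,1)$, so that $g=(g_1,\dots,g_{N-1})$ identifies $S$ with the cube. Each $g_i$ is a function given on $S$; by the characteristic-method Theorem (the unnamed theorem quoted from Courant \& Hilbert, applied with $h\equiv 0$), there is a unique solution $h_i$ on $\Omega(S)$ of $\nabla h_i^T P=0$ with $h_i=g_i$ on $S$. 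Thus each $h_i$ is an invariant of $P$ taking values in $(0,1)$, which is exactly the claimed family.

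Next I would show these $h_i$ (together with $m=M$) "coordinatize" $\Omega(S)$, i.e. the map $\bm{x}\mapsto(h_1(\bm{x}),\dots,h_{N-1}(\bm{x}),m(\bm{x}))$ is a diffeomorphism from $\Omega(S)$ onto $(0,1)^{N-1}\times J$ for the appropriate interval range $J$ of $m$. The key point is that every point $\bm{x}\in\Omega(S)$ lies on a unique orbit meeting $S$ once, at a unique point $\bm{y}(\bm{x})\in S$; since the $h_i$ are constant along orbits, $h_i(\bm{x})=g_i(\bm{y}(\bm{x}))$, so $(h_1,\dots,h_{N-1})$ records which orbit we are on, while $m(\bm{x})$ records the elapsed time along that orbit from $S$ (because $\dot m=1$ and $m=0$ on $S$). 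Non-recurrence is what makes this assignment single-valued; smoothness and the $C^1$ hypothesis on $P$ give the differentiable inverse via the flow map. In these coordinates $P$ becomes the constant field $\partial/\partial m$, which is essentially the flowbox picture the paper advertises.

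Finally I would assemble the result. Let $\Phi$ be any Koopman eigenfunction on $\Omega(S)$ with eigenvalue $\lambda$. By \Cref{Lem:UnitVelocityKEF}, $\Phi(\bm{x})=h(\bm{x})e^{\lambda m(\bm{x})}$ with $h$ an invariant of $P$. It remains to see that any invariant $h$ is a function of $h_1,\dots,h_{N-1}$ alone: in the coordinates $(h_1,\dots,h_{N-1},m)$ the condition $\nabla h^T P=0$ reads $\partial h/\partial m=0$, so $h$ depends only on $h_1,\dots,h_{N-1}$; call that dependence $f$, differentiable since $h$ is and the coordinate change is a diffeomorphism. Then $\Phi=f(h_1,\dots,h_{N-1})e^{\lambda m}$, which is \eqref{eq:generalSolutionKPDE} with $M=m$. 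Conversely any such $f(h_1,\dots,h_{N-1})e^{\lambda m}$ is easily checked to solve \eqref{eq:KEFPDE}.

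The main obstacle I anticipate is the global coordinate claim in the second paragraph — namely that $(h_1,\dots,h_{N-1},m)$ is a genuine diffeomorphism of all of $\Omega(S)$, not merely a local one. This needs that the flow out of $S$ is injective and that the orbit-footpoint map $\bm{x}\mapsto\bm{y}(\bm{x})$ is smooth; injectivity is exactly non-recurrence, and smoothness follows from smooth dependence of solutions of \eqref{eq:dynamics} on initial data, but one must also ensure the range of $m$ along each orbit is an interval and that the Jacobian never degenerates (it does not, since along orbits $\partial/\partial m$ is transverse to $S$ by $\nabla m^T P=1\neq 0$). Handling the case where $S$ does not globally parametrize as a single cube — or where different orbits see different $m$-intervals — is a routine but slightly delicate bookkeeping matter; the cleanest route is to note it all reduces to the classical rectification/flowbox theorem applied with $S$ as the transverse section.
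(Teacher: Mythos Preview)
Your proposal is correct and follows essentially the same approach as the paper: parametrize $S$ by $(0,1)^{N-1}$, propagate the coordinate functions along orbits via the Cauchy problem $\nabla h_i^T P=0$ to obtain the invariants $h_i$, and then invoke \cref{Lem:UnitVelocityKEF}. The only minor difference is in the step showing that every invariant $h$ factors through $(h_1,\dots,h_{N-1})$: you pass through the flowbox coordinate system and argue $\partial h/\partial m=0$, whereas the paper argues more directly that both $h$ and $f(h_1,\dots,h_{N-1})$ (with $f$ the pullback of $h|_S$ under the parametrization) solve the same Cauchy problem and hence coincide by uniqueness---this sidesteps the need to verify the global diffeomorphism you flag as the main obstacle.
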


\begin{proof}
We first show the existence of $N-1$ invariants $h_1, \ldots h_{N-1}$ of  on $\Omega(S)$  such that  any invariant on $\Omega(S)$ is of the form $f(h_1, \ldots h_{N-1})$. 
    
 Since $S$ is  homeomorphic to the $N-1$ dimensional ball, 
  there exists $N$ functions $X:=s_1, \ldots s_{N}$ on  $[0,1]^{N-1}$ which are parameterization of $S$, namely $X:(0,1)^{N-1}\rightarrow S$ is a $1-1$ surjection. 

Let $h_i$ be the unique  solution of the Caushy problem $\nabla h_i^T\cdot P(x)=0$ on $\Omega(S)$ such that $h_i(X(\tau_1, \ldots, \tau_{N-1}))=\tau_i$ for any $(\tau_1, \ldots \tau_{N-1})\in (0,1)^{N-1}$. 
  
  Claim: Any invariant of the system \cref{eq:dynamics} is given by $f(h_1(\bm{x}), \ldots h_{N-1}(\bm{x}))$ for some differentiable function $f$ on $(0,1)^{N-1}$. Indeed, suppose $G$ is some invariant function on $\Omega(S)$. Let $g$ be the restriction of $G$ to $S$. Since $X$ is a parameterization of $S$ on $(0,1)^{N-1}$ then $g\circ X=f $ for some $f$ defined on $(0,1)^{N-1}$. 

    Since $G$ is an invariant on $\Omega(S)$, it is the {\em unique} solution of the Cauchy problem  $\nabla^T \Phi\cdot P=0 $ such that $\Phi=g$ on $S$. On the other hand, the function $g$ equals $f(h_1, \ldots h_{N-1})$ on $S$  by definition. Moreover, 
    $$\nabla^T f(h_1, \ldots h_{N-1}) \cdot P= \sum_{i=1}^{N-1}\frac{\partial f}{\partial h_i} \nabla^T h_i\cdot P =0$$
    so $f(h_1, \ldots h_{N-1})=G$ by uniqueness. 
    
    The Theorem now follows from \cref{Lem:UnitVelocityKEF}. 
\end{proof}

Back to the Examples \ref{examples}: In case [a] we get $M=\frac{1}{2}\ln (x_1^2+ x_2^2)$. A possible invariant is $h(x_1, x_2)=x_1/x_2$. This invariant is not defined on  the line $x_2=0$,  but we may define $\tilde{h}=\arctan\left(x_1/ x_2\right)\equiv \arg(x_1, x_2)$ as an invariant. Depending  the chosen branch of the argument, $\tilde{h}$ can be extended on the entire plain, excluding the orbit of a single solution intersecting a given point $(x^0_1, x^0_2)$ on the circle. The Koopman eigenfunction 
$\Phi_\lambda=
\arg(x_1, x_2) e^{\frac{\lambda(x_1^2+x_2^2)}{2}}$ 
however, {\em cannot} be extended to the entire plain (see \cref{fig:initSure}). 
\begin{figure}[phtb!]
    \centering
    \captionsetup[subfigure]{justification=centering}
    \begin{subfigure}[t]{0.48\textwidth} 
\includegraphics[width=1\textwidth,valign = t]{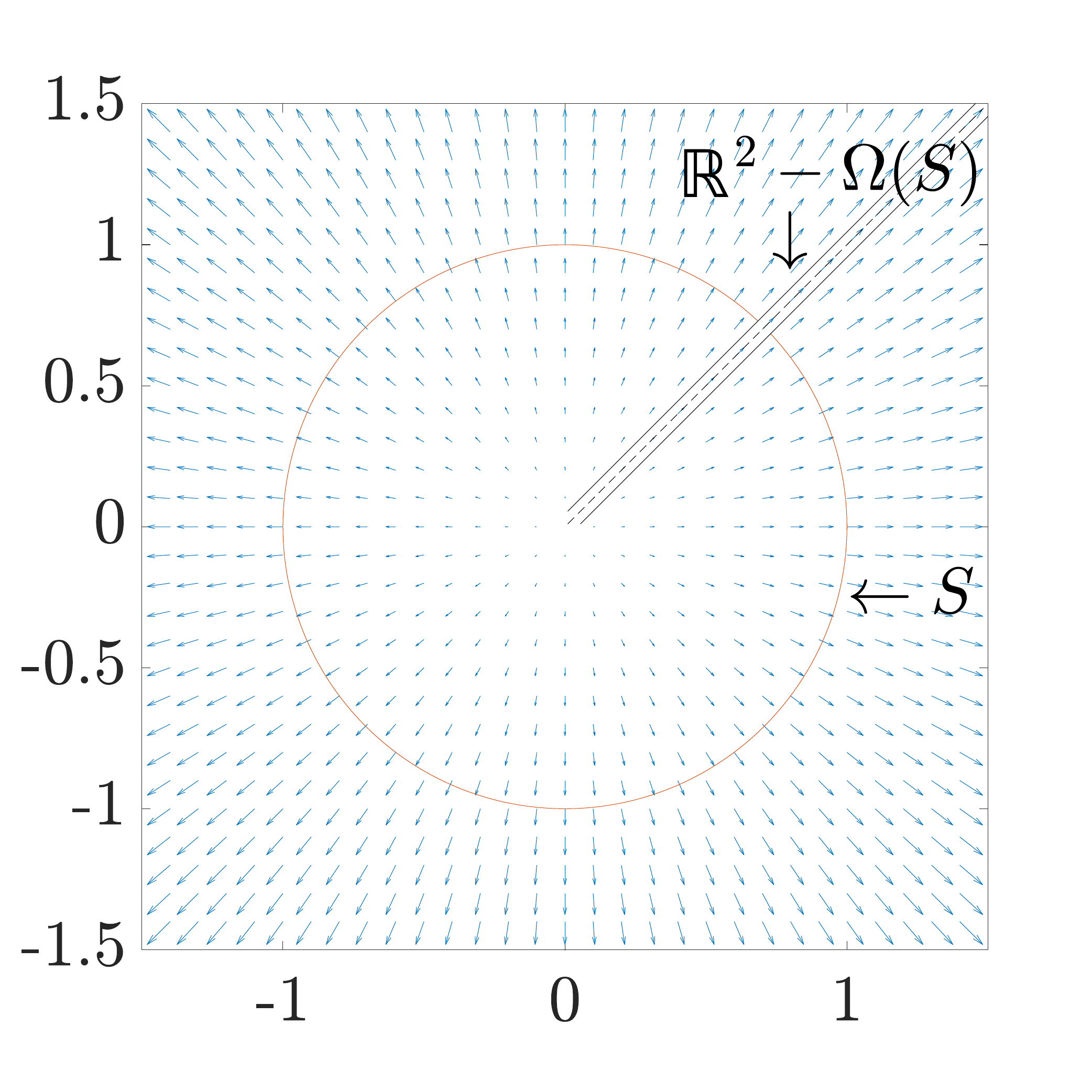}
        \label{subfig:attractor}
    \end{subfigure}
    \begin{subfigure}[t]{0.48\textwidth} 
\includegraphics[width=1\textwidth,valign = t]{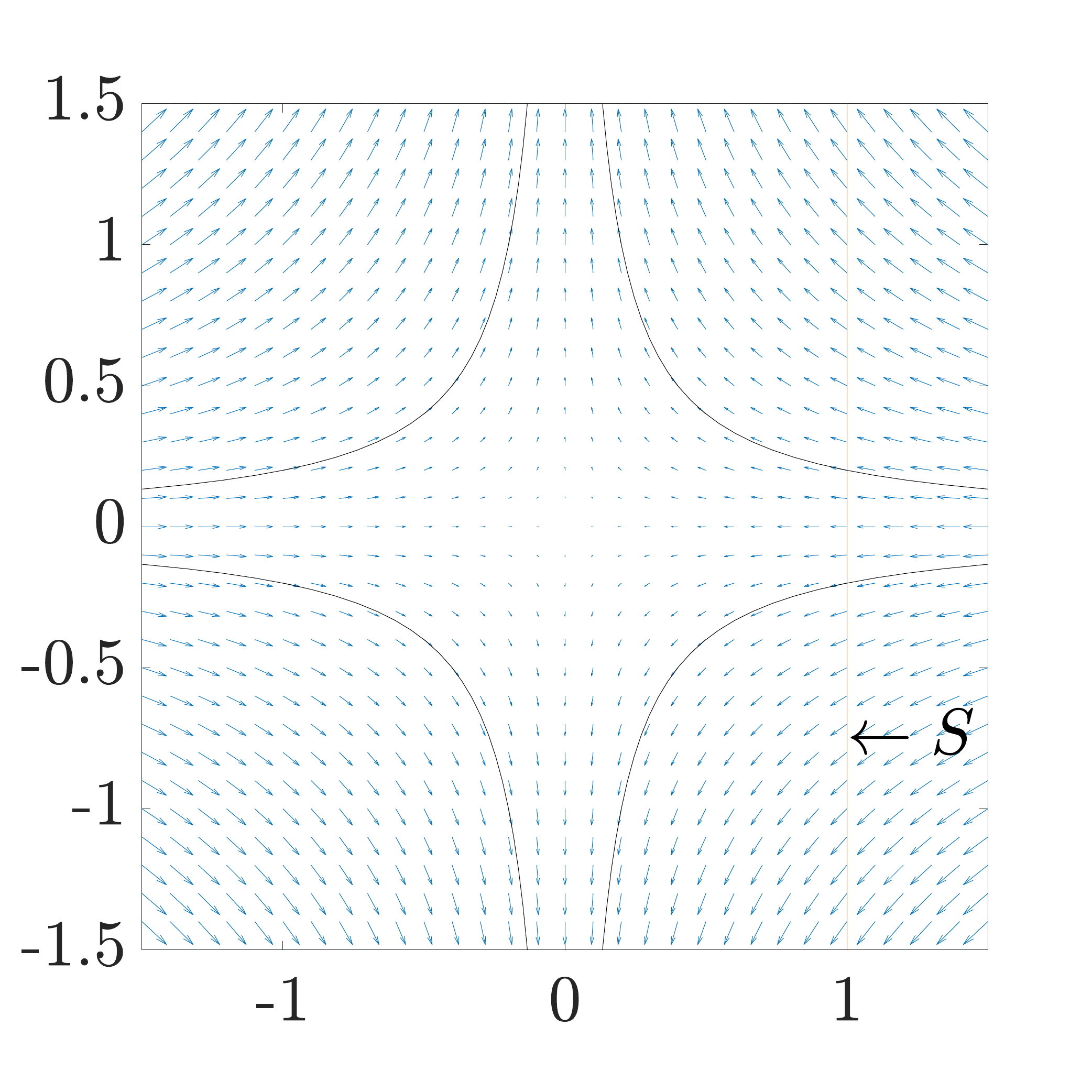}
        \label{subfig:hyperbolic}
    \end{subfigure}
        
        \caption{Initial surfaces for a source and a hyperbolic systems}    
    \label{fig:initSure}
\end{figure}

In case [b], $M=\frac{1}{2}\ln\left(\frac{1}{x^2_1}\right)$ on the half plain defined by $x_1>0$ (consistent with $M(1, x_2)=0$). We obtain an invariant $h(x_1, x_2)=x_1x_2$. This invariant,  however,  can be extended to the entire plain. 

 In case [c] there are no non-recurrent surfaces, so there are no Koopman eigenfunctions for arbitrary eigenvalue $\lambda$. However, in this case there exists eigenfunctions of the form $h(x_1^2+x_2^2) e^{in\arg(x_1, x_2)}$ for any function $h$ and integer $n$, corresponding to $\lambda=in$.

\OFF{Following \cref{theo:generalSol}, we can define a minimal as,
\begin{definition}[Minimal Set]\label{def:minimalSet} A minimal set of Koopman eigenfunctions of an $N$ dimensional dynamical system is a set of $N$ eigenfunctions that their Jaacobian matrix is of full rank.
\end{definition}}

\subsection{Minimal Set}
The mathematical structure of \ac{KEF} space is defined and studied thoroughly in \cite{cohen2023minimal}. This structure is not a ring but more complex than a group. This structure is algebraic-differential, meaning there are admissible actions following differential conditions for which the Koopman space is closed. After defining this structure, the author defined dependent and independent sets of \acp{KEF}. A set of \acp{KEF} are independent if their Jaacobian is a full-rank matrix. Thus, there is a set of at maximum $N$ independent \acp{KEF}, named a \emph{minimal set}, from which one can generate the rest of the eigenfunctions. Following \cref{theo:generalSol}, a minimal set can be,
\begin{equation}\label{eq:minimalSet}
  \begin{split}
        \Phi_1(\bm{x})&=h_1(\bm{x})e^{m(\bm{x})}\\
        \Phi_2(\bm{x})&=h_2(\bm{x})e^{m(\bm{x})}\\
        &\,\,\,\vdots\\
        \Phi_{N-1}(\bm{x})&=h_{N-1}(\bm{x})e^{m(\bm{x})}\\
        \Phi_N(\bm{x})&=e^{m(\bm{x})}
    \end{split}.
\end{equation}

\subsection{Flowbox}
Flowbox is a coordinate system in which the dynamic is trivial. Meaning, the velocity of one coordinate is one and the rest's are zeros. The Flowbox theorem states that for any point in a Lipschitz vector field there is an invertible transformation from a  neighborhood of a point to a flowboxed coordinate \cite{calcaterraboldt2008flowbox}. This statement holds if the point is far from a system singularity. \cref{theo:generalSol} naturally induces  a flowbox coordinate system,
\begin{equation}\label{eq:flowbox}
    \begin{split}
        z_1 &= h_1(\bm{x})\\
        z_2 &= h_2(\bm{x})\\
        &\,\,\,\vdots\\
        z_{N-1} &= h_{N-1}(\bm{x})\\
        z_N &=m(\bm{x})
    \end{split}.
\end{equation}
and, in these coordinates, any solution satisfying $\bm{x}(0)\in S$  takes the form 
\begin{equation}\label{eq:canonicalDynamicSplitting}
    \begin{split}
        z_1(t)&=h_1(\bm{x}(0))\\
        &\,\,\,\vdots\\
        z_{N-1}(t)&= h_{N-1}({\bm{x}(0)})\\
        z_N(t)&=t
    \end{split}.
\end{equation}
for any $-\infty < t < \infty$.

\section{Conclusions}
In many applications in signal and image processing, data mining we are looking for a good representation. Often, the measure of "goodness" depends on the space in which data is embedded and the specific application at hand. However, generally speaking, sparsity and accuracy are the crucial parameter for data representation, reconstructing, storing and retrieval. 

The same considerations are guiding us in dynamical system representations which lead us to formulate the solutions of \ac{KPDE} concisely and accurately. By using the characteristic method, one can formulate the general solution of \ac{KPDE} as a function of $N$ independent Koopman Eigenfunctions. This formulation emphasizes the importance of the geometry considerations in finding \acp{KEF} numerically for "good" representation. Thus, randomly sampled vector field or sampled orbits randomly initiated can easily reveal the underlying dynamic under the assumption of the dynamic and \acp{KEF}' smoothness.

\section*{List of Acronyms}
\begin{acronym}
\acro{KEF}[KEF]{\emph{Koopman Eigenfunction}}
\acro{KPDE}[KPDE]{\emph{Koopman Partial Differential Equation}}
\acro{PDE}[PDE]{\emph{Partial Differential Equation}}

\end{acronym}

\OFF{\section{Proof}
Let $N=2$

$\phi_1 = f_1(h)e^m,\quad \phi_2 = f_2(h)e^m\quad \phi_3=f_3(h) e^m$

prove that

$\phi_3 = F(\phi_1,\phi_2)$}
\OFF{\section{Minimal Set -- Numerical Part}\label{set:numericalPart}
Here, a method to find the minimal set of \acp{KEF}, given the vector field $P(\bm{x})$, is presented. Based on the conclusion from theoretical discussion above, finding a minimal set (\cref{eq:minimalSet})  is equivalent to finding a flowbox coordinate system (\cref{eq:flowbox}). Practically, to make the numerics easier our algorithm finds a rotated flowboxed coordinate system where where the dynamic is one in all coordinates, namely 
\begin{equation}\label{eq:unit}
    \begin{split}
        \dot{y}_1&=\nabla^T y_1 (\bm{x})P(\bm{x})=1\\
        &\vdots\\
        \dot{y}_N&=\nabla^T y_N (\bm{x})P(\bm{x})=1
    \end{split}.
\end{equation}
The mapping from the system coordinates to the unit velocity one in \cref{eq:unit} minimizes the following functional
\begin{equation}\label{eq:functional}
    \begin{split}
        \mathcal{L}(\bm{y}) =\min_{\bm{y}}\int_{\mathcal{D}}&  \underbrace{\sum_{i=1}^N\left(\inp{\nabla y_i(\bm{x})}{P(\bm{x})}-1\right)^2}_{A}\\
        &\quad +\quad\underbrace{\sum_{i=1}^{N-1}\sum_{j=i+1}^N\inp{\nabla y_i(\bm{x})}{\nabla y_j(\bm{x})}^2}_{B}d\bm{x}.
    \end{split}
\end{equation}
The summand $A$ guarantees unit velocity for each coordinate. The $B$ term guarantees the transformation is invertible. Each summand can be factorized according to the system in question.
\subsection{Experiment Settings}
The theory discussed above is applied on 2D linear and nonlinear dynamical systems. The analytic and numeric solutions of each system are presented.


\subsection{Linear systems}
Let us consider the following linear systems of the form
\begin{equation}\label{eq:2DlinearSys}
    \begin{bmatrix}
        \dot{x}_1\\\dot{x}_2
    \end{bmatrix}=A\begin{bmatrix}
        x_1\\x_2
    \end{bmatrix}
\end{equation}
where $A$ is a $2\times 2$ matrix gets the values of $A_R,\, A_C$, and $A_I$
\begin{equation*}
    A_R = \frac{1}{2}\begin{bmatrix}
        11&-5\\
        -5&11
    \end{bmatrix}, \quad A_C = \frac{1}{10}\begin{bmatrix}
        -4&1\\-4&-5
        \end{bmatrix}, \quad A_I = \begin{bmatrix}
        0&1\\-1&0
    \end{bmatrix}.
\end{equation*}
These systems have either real, complex, or imaginary eigenvalues, respectively.

\subsubsection{Real eigenvalues}

\paragraph{\bf{Analytic part}}
The eigenpairs of $A_R$ are $\{8,\frac{1}{\sqrt{2}}\begin{bmatrix}
    1\\-1
\end{bmatrix}\}$, $\{3,\frac{1}{\sqrt{2}}\begin{bmatrix}
    1\\1
\end{bmatrix}\}$, and the solution is 
\begin{equation}
    \begin{bmatrix}
        x_1\\x_2
    \end{bmatrix}= \frac{a_1}{\sqrt{2}}\begin{bmatrix}
        1\\-1
    \end{bmatrix}e^{8t}+\frac{a_2}{\sqrt{2}}\begin{bmatrix}
        1\\1
    \end{bmatrix}e^{3t}.
\end{equation}
A minimal set is $\{\Phi_1(\bm{x}),\Phi_2(\bm{x})\}$ where
\begin{equation}
    \Phi_1 = \dfrac{1}{\sqrt{2}}(x_1+x_2)\quad \textrm{and}\quad  \Phi_2 = \dfrac{1}{\sqrt{2}}(x_1-x_2).    
\end{equation}
It is easy to see that $\nabla \Phi_1 \perp \nabla \Phi_2$. As long as $\Phi_1,\Phi_2\ne 0$ one can formulate a minimal set of unit velocity measurements as
\begin{equation}\label{eq:linRealEVSplitTM}
    y_1=\frac{1}{3}\ln{\abs{\Phi_1}},\quad 
        y_2=\frac{1}{8}\ln{\abs{\Phi_2}},
\end{equation}
and the flowboxed coordinates as
\begin{equation}\label{eq:linRealEVSplitFB}
    z_1=\dfrac{y_1+y_2}{2},\quad 
        z_2=\dfrac{y_1-y_2}{2}.
\end{equation}

\paragraph{\bf{Numeric part}} In \cref{fig:2DLinReal}, the domain $\mathcal{D} =[4,6]\times [1,3]$ is depicted. The first row is the analytic solution, Eq. \eqref{eq:linRealEVSplitFB}, and the second row is the numeric one. The left column is the vector field in the system coordinates (blue), level sets of the flowbox coordinates $z_1$ (black) and $z_2$ (red). The right column is the vector field in the flowbox coordinates.

\begin{figure}[phtb!]
    \centering
    \captionsetup[subfigure]{justification=centering}
    \begin{subfigure}[t]{0.48\textwidth} 
\includegraphics[width=1\textwidth,valign = t]{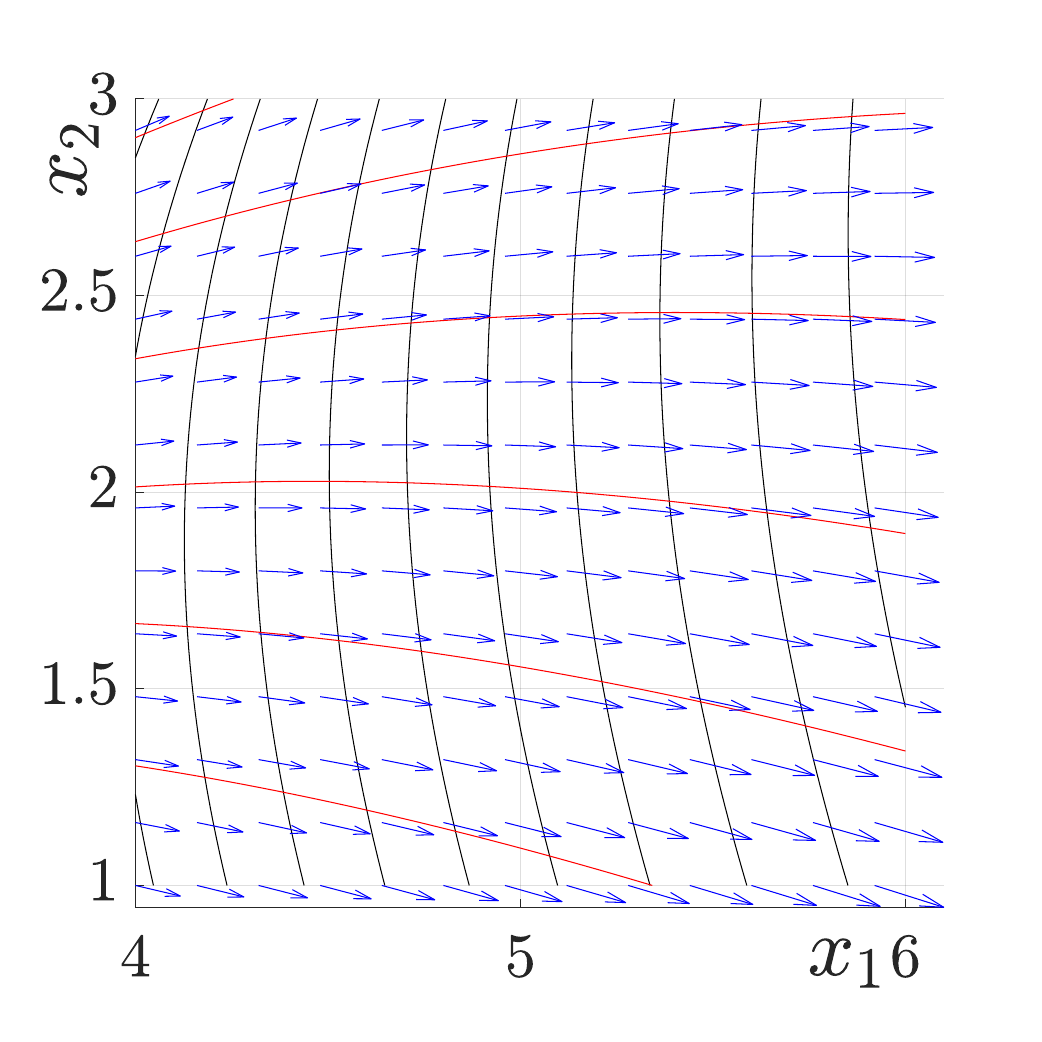}
        \label{subfig:linearSysRealEVFBcon}
    \end{subfigure}
    \begin{subfigure}[t]{0.48\textwidth} 
\includegraphics[width=1\textwidth,valign = t]{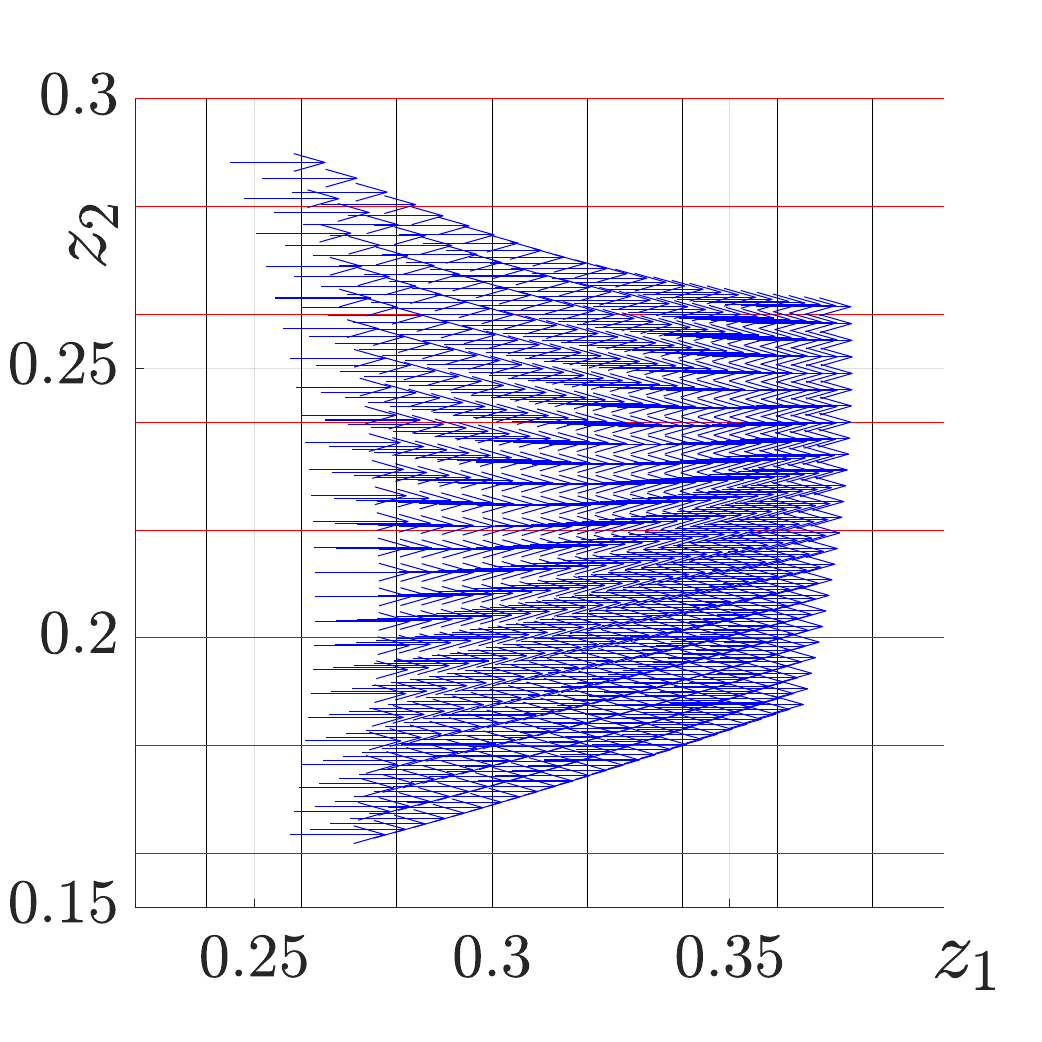}
        \label{subfig:linearSysRealEVFB}
    \end{subfigure}
        \label{fig:linearSysRealEVAnalytic}
        \caption*{Analytic solution of the system $A_R$ the flowbox coordinates via a minimal set}    
    \begin{subfigure}[t]{0.48\textwidth} 
\includegraphics[width=1\textwidth,valign = t]{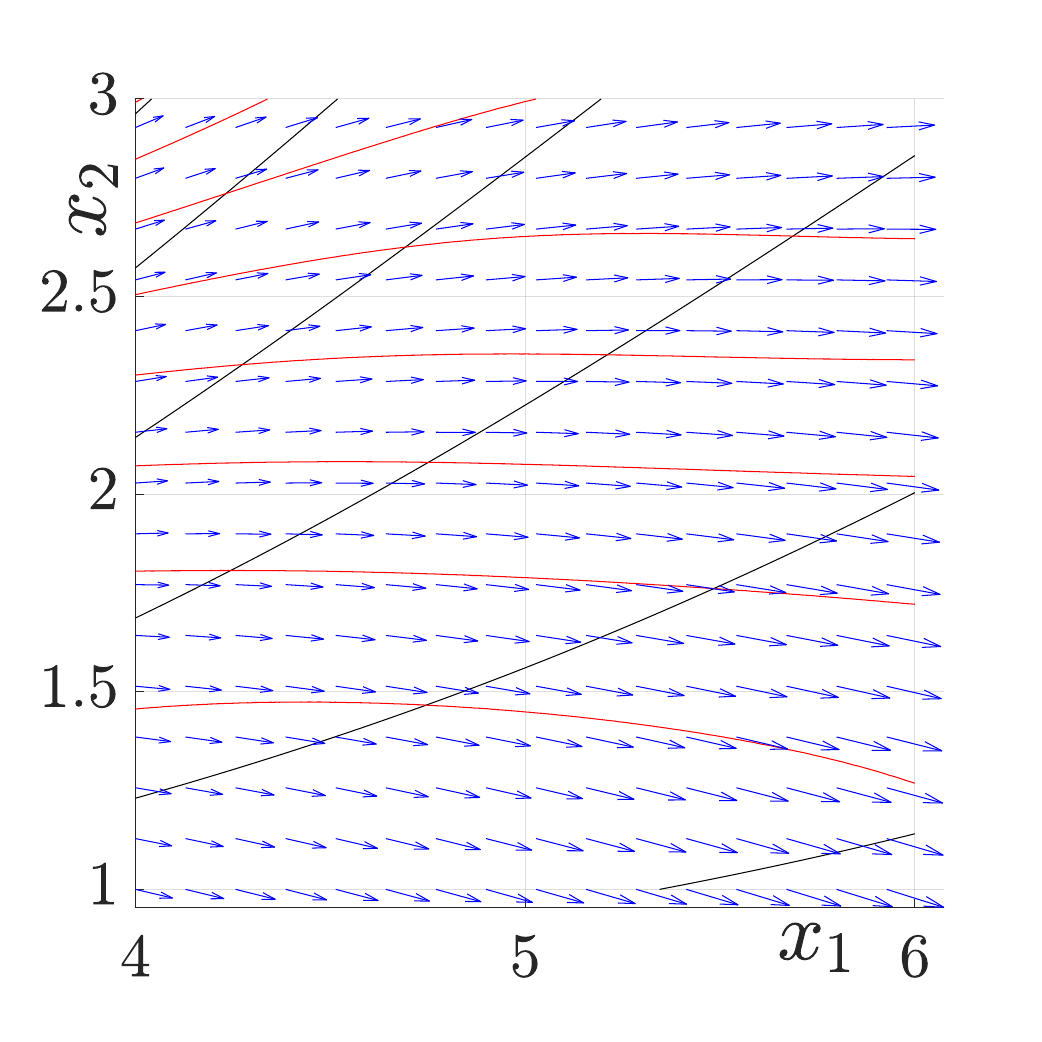}
        \label{subfig:linearSysRealEVFBconNumwithoutFoliationCleanRe}
    \end{subfigure}
    \begin{subfigure}[t]{0.48\textwidth} 
\includegraphics[width=1\textwidth,valign = t]{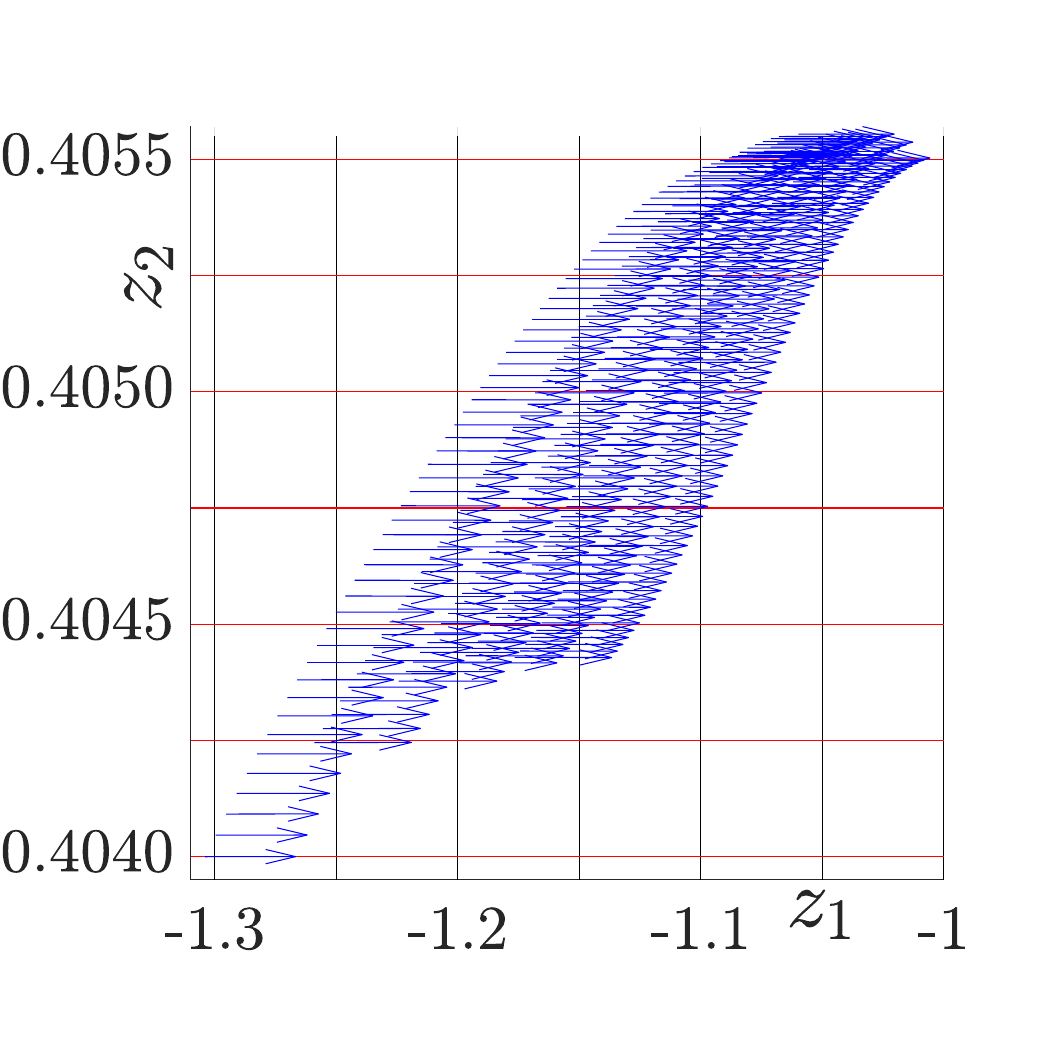}
        \label{subfig:linearSysRealEVFBNumwithoutFoliationCleanRe}
    \end{subfigure}
    \caption*{Numeric generation of a minimal set and the corresponding flowbox coordinates}
    \caption{Vector field (blue), level sets of the flowbox coordinates}
    \label{fig:2DLinReal}
\end{figure}

\subsubsection{Complex eigenvalues}
\paragraph{\bf{Analytic part}} The eigenpairs of $A_C$ are  
\begin{equation*}
        \lambda_{1,2}=-\frac{9}{20}\pm \frac{\sqrt{15}}{20}i,\quad \bm{v}_{1,2}=\left(\begin{matrix}
            -\dfrac{\sqrt{5}}{20}\mp \dfrac{\sqrt{3}}{4}i\\
            2\dfrac{\sqrt{5}}{5}
        \end{matrix}\right)
\end{equation*}
and the solution is 
\begin{equation}
    \bm{x}(t) = a_1\bm{v}_1\exp\{\lambda_1 t\}+a_2\bm{v}_2\exp\{\lambda_2 t\}.
\end{equation}
A minimal set can be $\Phi_1(\bm{x}) = \inp{\bm{v}_1^\perp}{\bm{x}}$ and $\Phi_2(\bm{x}) = \inp{\bm{v}_2^\perp}{\bm{x}}$ where 
\begin{equation}
\bm{v}_1^\perp=\begin{bmatrix}
    -\dfrac{2}{\sqrt{5}}i\\
\dfrac{\sqrt{3}}{4} - \dfrac{\sqrt{5}}{20}i
\end{bmatrix},\quad 
\bm{v}_2^\perp=\begin{bmatrix}
    \dfrac{2}{\sqrt{5}}i\\
\dfrac{\sqrt{3}}{4} + \dfrac{\sqrt{5}}{20}i
\end{bmatrix},
\end{equation}
and as long as $\bm{x}\ne 0$ a unit velocity can be 
\begin{equation}
    \begin{split}
        y_1(\bm{x})&=\frac{1}{\lambda_1}\ln{\abs{\Phi_1(\bm{x})}},\quad 
        y_2(\bm{x})=\frac{1}{\lambda_2}\ln{\abs{\Phi_2(\bm{x})}}
    \end{split}.
\end{equation}
The flowbox coordinates are 
\begin{equation}
    \begin{split}
        z_1&=\frac{y_1+y_2}{2},\quad 
        z_2=\frac{y_1-y_2}{2}
    \end{split}
\end{equation}

\paragraph{\bf{Numeric part}} We summarize the the analytic and the numeric results in \cref{fig:2DLinComplex}.

\begin{figure}[phtb!]
    \centering
    \captionsetup[subfigure]{justification=centering}
    \begin{subfigure}[t]{0.48\textwidth} 
\includegraphics[width=1\textwidth,valign = t]{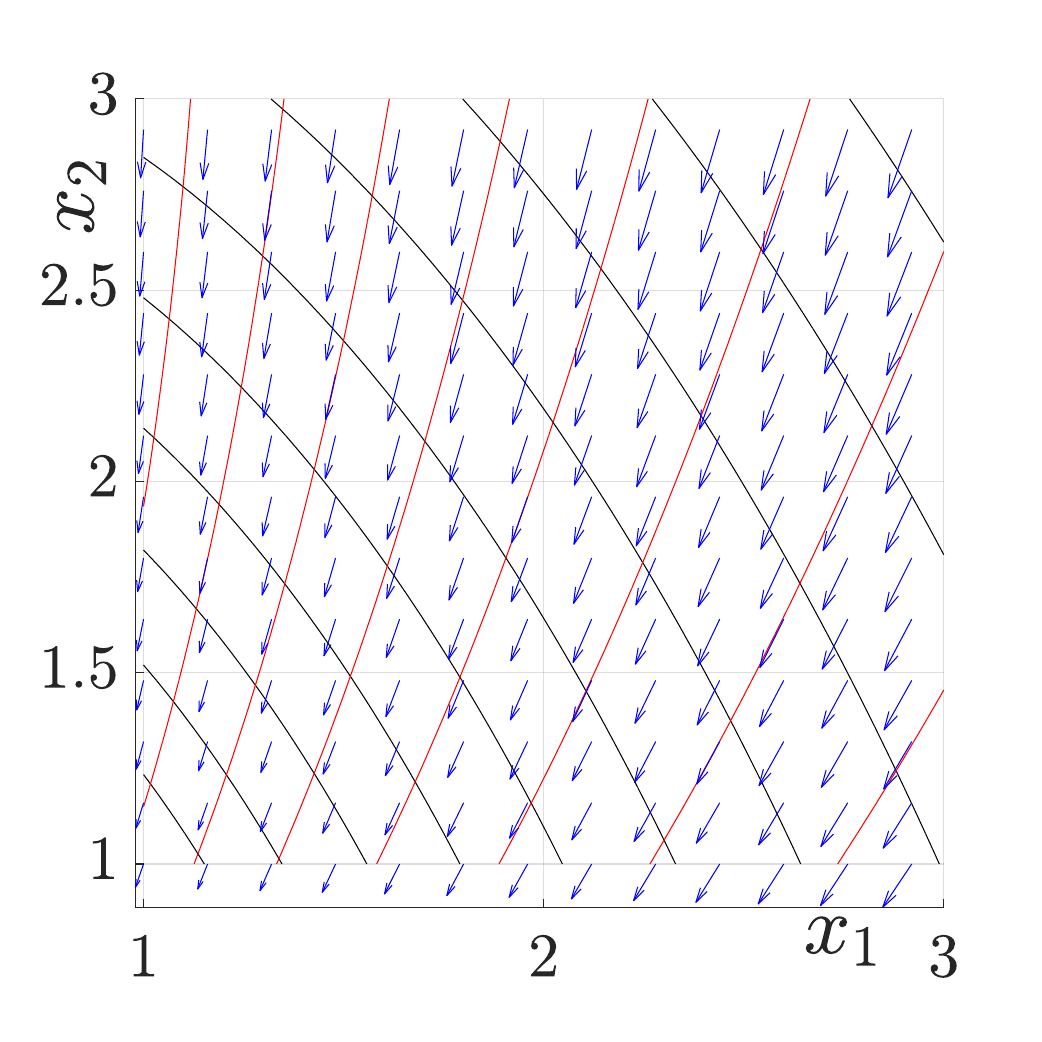}
        \label{subfig:linearSysComplexEVFBcon}
    \end{subfigure}
    \begin{subfigure}[t]{0.48\textwidth} 
\includegraphics[width=1\textwidth,valign = t]{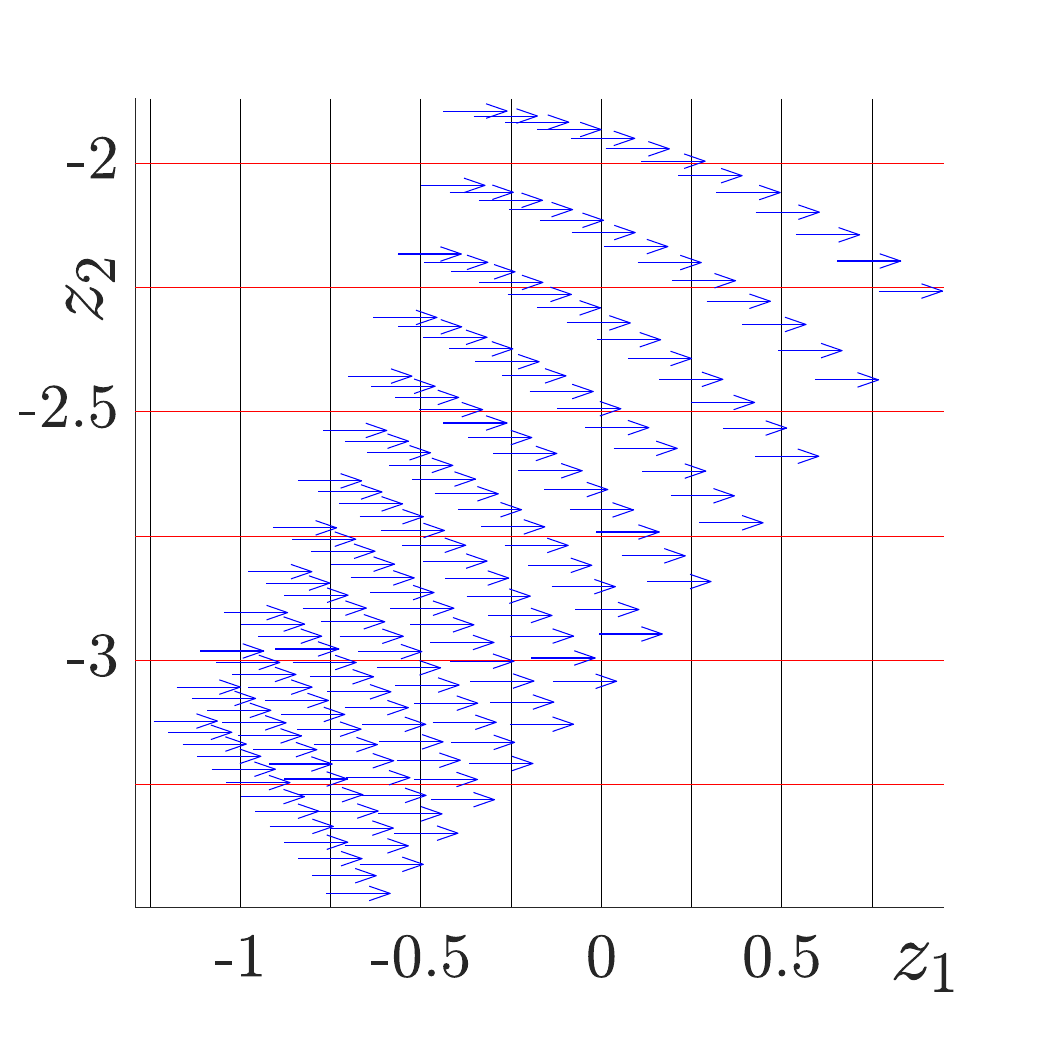}
        \label{subfig:linearSysComplexEVFB}
    \end{subfigure}\\
    \caption*{Analytic solution of the system $A_C$ finding flowbox coordinates via a minimal set}
    \begin{subfigure}[t]{0.48\textwidth} 
\includegraphics[width=1\textwidth,valign = t]{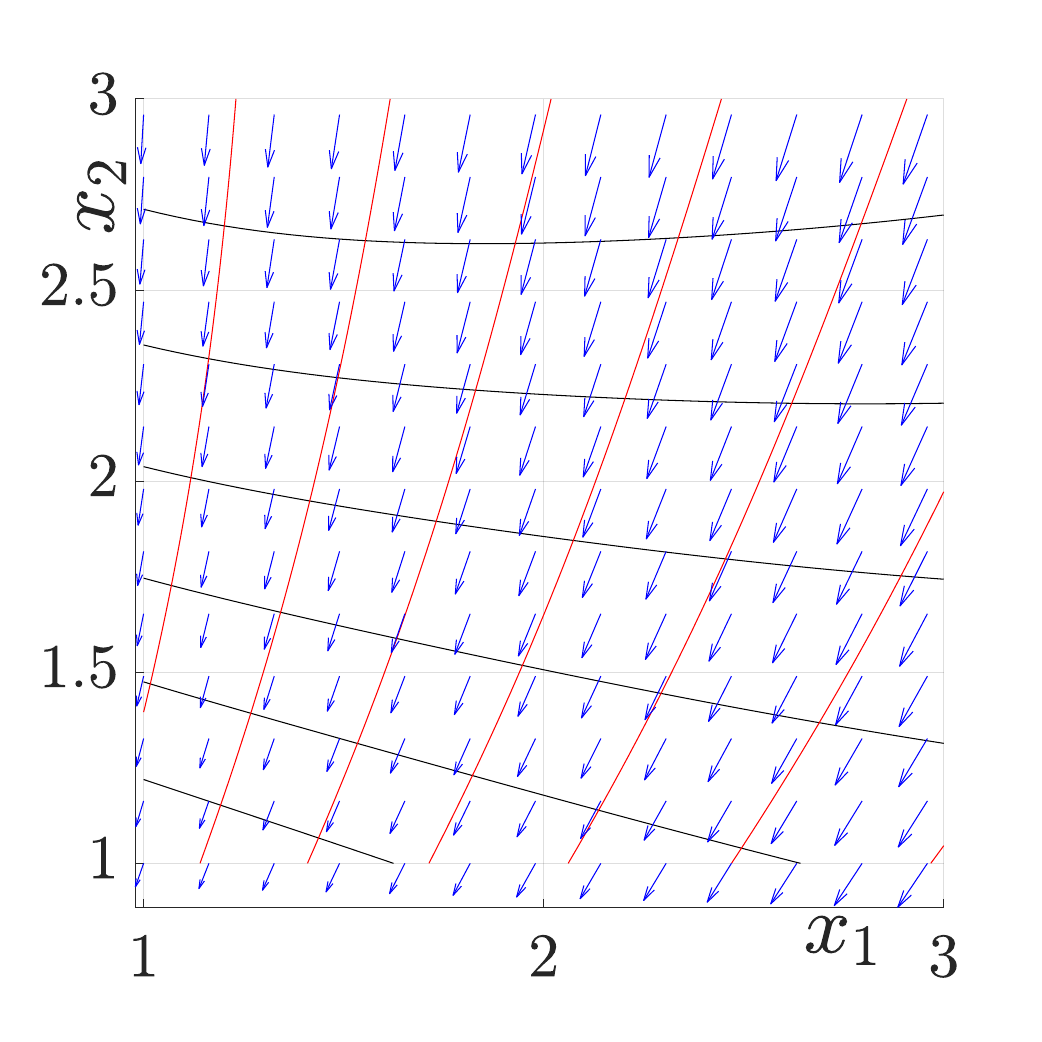}
        \label{subfig:linearSysComplexEVFBconNum}
    \end{subfigure}
    \begin{subfigure}[t]{0.48\textwidth} 
\includegraphics[width=1\textwidth,valign = t]{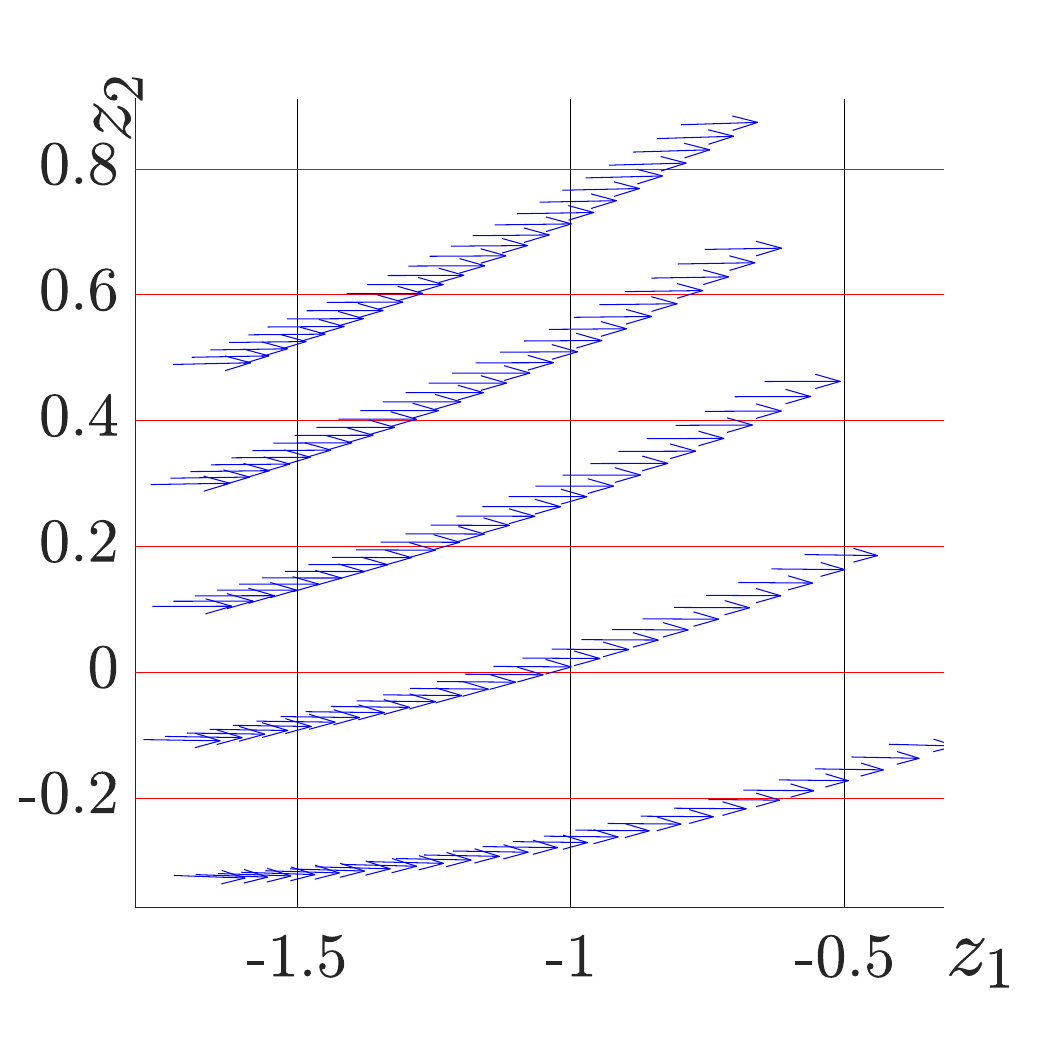}
        \label{subfig:linearSysComplexEVFBNum}
    \end{subfigure}
    \caption*{Numeric generation of a minimal set and the corresponding flowbox coordinates}
    \caption{Vector field (blue), level sets of the flowboxed coordinate system $z_1$ and $z_2$ (black and red).}
    \label{fig:2DLinComplex}
\end{figure}

\subsubsection{Imaginary eigenvalues}
\paragraph{\bf{Analytic part}} The eigenpairs of $A_I$ are 
\begin{equation*}
        \lambda_{1,2}=\pm i,\quad \bm{v}_{1,2}=\frac{1}{\sqrt{2}}\left(\begin{matrix}
            1\\
            \pm i
        \end{matrix}\right)
\end{equation*}
and the solution is
\begin{equation}\label{eq:fullSolHarSys}
    \begin{bmatrix}
        x_1\\
        x_2
    \end{bmatrix} = \frac{a_1}{\sqrt{2}}\begin{bmatrix}
        1\\ i
    \end{bmatrix}e^{it}+\frac{a_2}{\sqrt{2}}\begin{bmatrix}
        1\\ -i
    \end{bmatrix}e^{-it}.
\end{equation}
A minimal set is 
\begin{equation}
    \Phi_1(\bm{x}) = (x_1-i\cdot x_2)/\sqrt{2}, \quad \textrm{and} \quad \Phi_2(\bm{x}) = (x_1+i\cdot x_2)/\sqrt{2}.
\end{equation}
The corresponding unit velocity and flowbox coordinates are
\begin{equation}
        y_1(\bm{x})=-i \ln (\Phi_1(\bm{x})),\quad 
        y_2(\bm{x})=i \ln (\Phi_2(\bm{x})),
\end{equation}
\begin{equation}
z_1 = \left(y_1 + y_2\right)/2,\quad z_2 =\left(y_1 - y_2\right)/2,    
\end{equation}
respectively
\paragraph{\bf{Numeric part}} In \cref{fig:2DLinImag} this system is depicted.

\begin{figure}[phtb!]
    \centering
    \captionsetup[subfigure]{justification=centering}
    \begin{subfigure}[t]{0.48\textwidth}
\includegraphics[width=1\textwidth,valign = t]{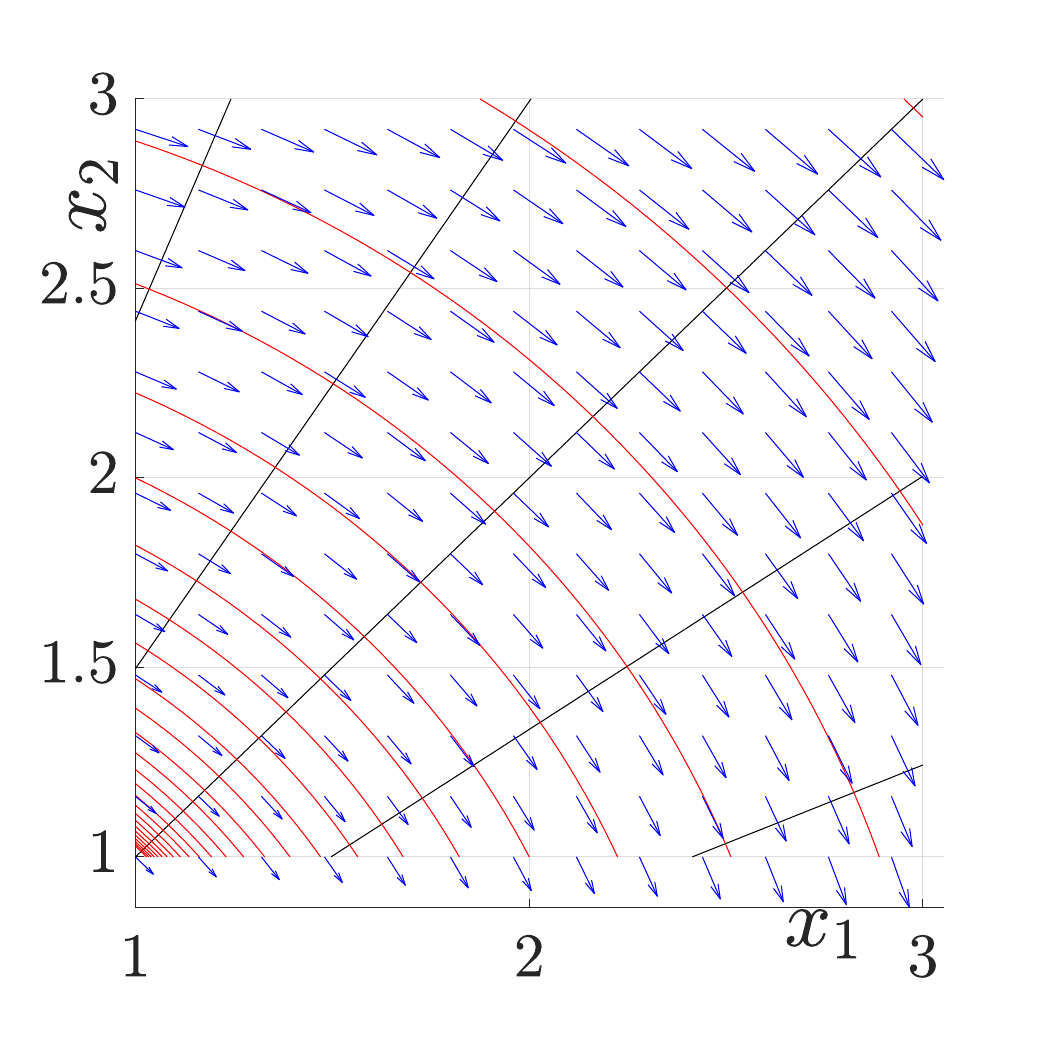}
        \label{subfig:linearSysImaginaryEVFBcon}
    \end{subfigure}
    \begin{subfigure}[t]{0.48\textwidth} 
\includegraphics[width=1\textwidth,valign = t]{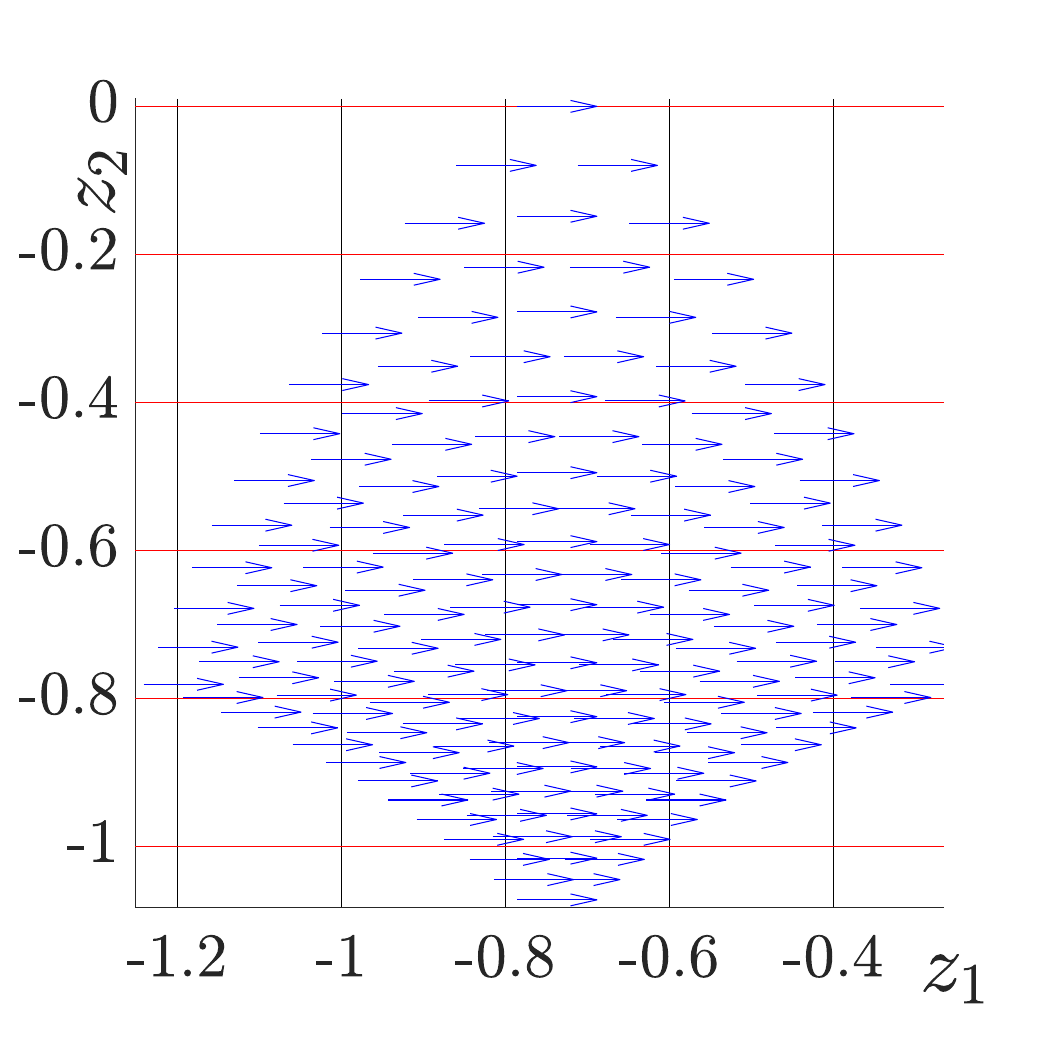}
        \label{subfig:linearSysImaginaryEVFB}
    \end{subfigure}\\
    \caption*{Analytic solution of the system, $A_I$, finding flowboxed coordinates via a minimal set}
    \captionsetup[subfigure]{justification=centering}
    \begin{subfigure}[t]{0.48\textwidth} 
\includegraphics[width=1\textwidth,valign = t]{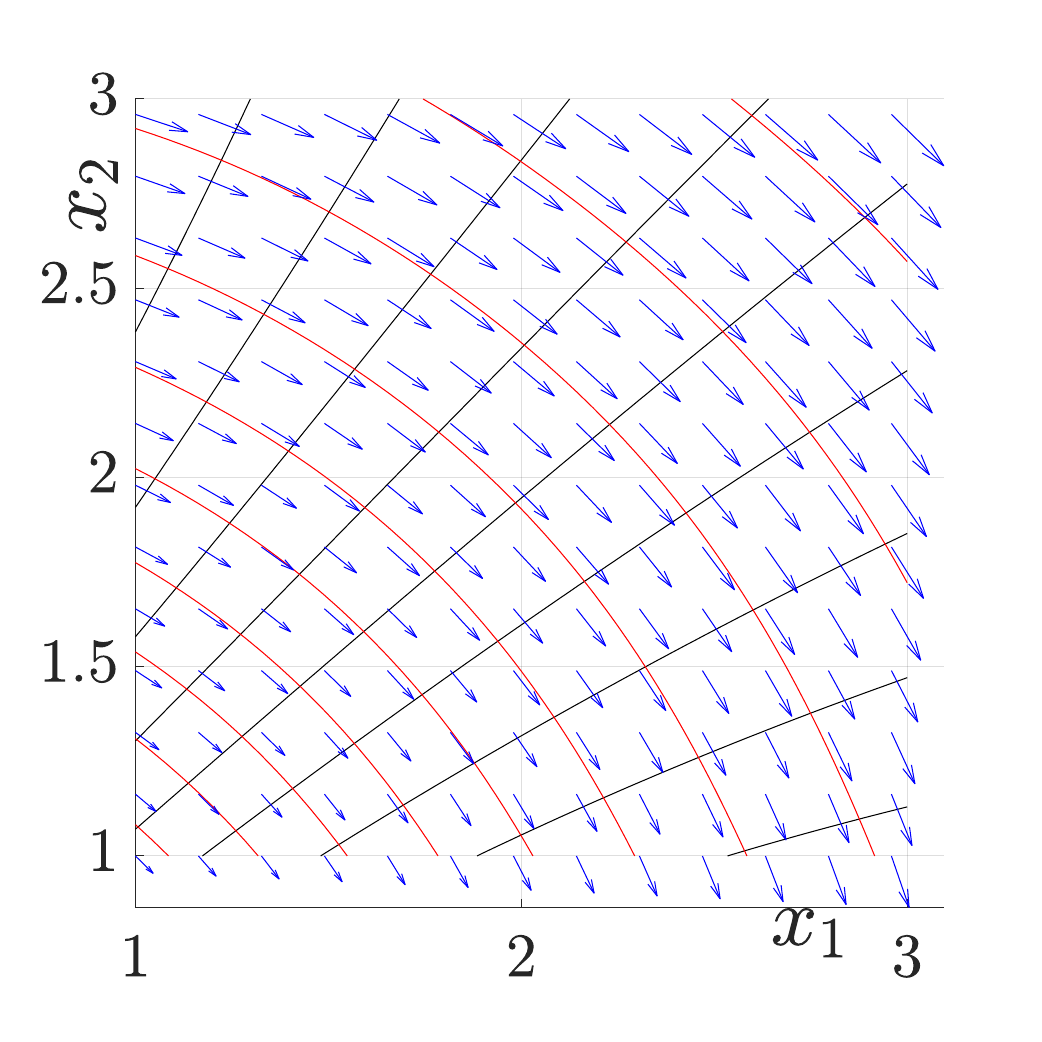}
        \label{subfig:linearSysImaginaryEVFBconNum}
    \end{subfigure}
    \begin{subfigure}[t]{0.48\textwidth} 
\includegraphics[width=1\textwidth,valign = t]{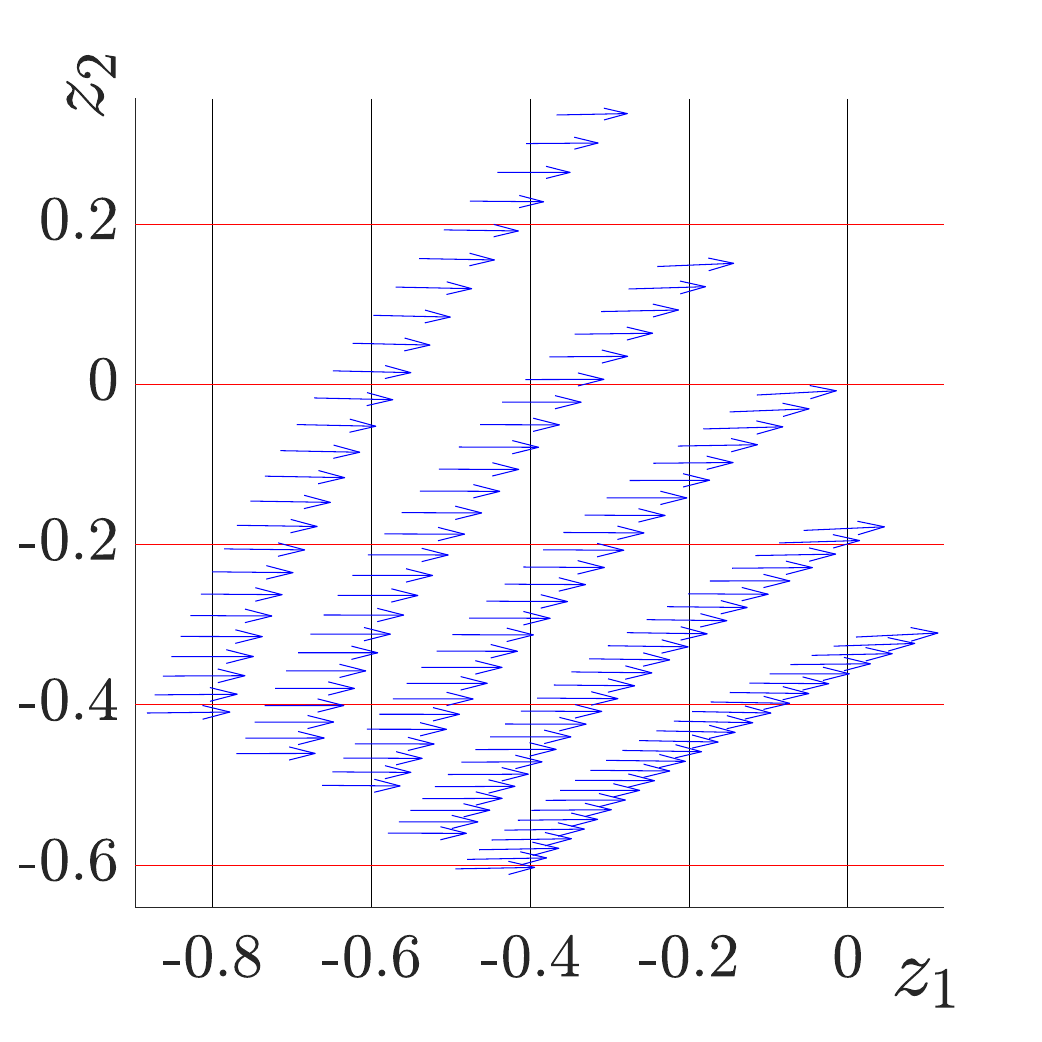}
        \label{subfig:linearSysImaginaryEVFBNum}
    \end{subfigure}
    \caption*{Numeric generation of a minimal set and the corresponding flowbox coordinates.}
    \caption{Flowboxing of a two dimensional system with imaginary eigenvalues}
    \label{fig:2DLinImag}
\end{figure}

\subsection{Nonlinear System}
\subsubsection*{Limit cycle}
Let us consider the following dynamical system
\begin{equation}\label{eq:2DnonlinearDyn}
    \begin{split}
        \dot{x}_1 &= -x_2+ x_1 (1 - x_1^2 - x_2^2)\\
        \dot{x}_2 &= x_1+ x_2 (1 - x_1^2 - x_2^2)
    \end{split},
\end{equation}
initialized with $\bm{x}_0$ where $\bm{x}_0\ne \bm{0}, \norm{\bm{x}_0}\ne 1$. A simpler dynamic representation is get with polar coordinates, $r=\sqrt{x_1^2 +x_2^2},\, \theta = \dfrac{1}{2i}\ln\left(\dfrac{x_1+ix_2}{x_1-ix_2}\right)$,
\begin{equation}
    \begin{split}
        \dot{r}&=(1-r)(1+r)r\\
        \dot{\theta} &= 1
    \end{split}.
\end{equation}
With these coordinates, finding the unit coordinate is almost trivial,
\begin{equation}
    y_1(\bm{x}) = \ln\left(\frac{\sqrt{x_1^2+x_2^2}}{\sqrt[]{\abs{1-x_1^2-x_2^2}}}\right),\quad
    y_2(\bm{x}) = \dfrac{1}{2i}\ln\left(\dfrac{x_1+ix_2}{x_1-ix_2}\right).
\end{equation}
And the corresponding flowboxed coordinates are
\begin{equation}\label{eq:2DnonlinearFB}
        z_1 = \left(y_1+y_2\right)/2,\quad z_2 = \left(y_1-y_2\right)/2.
\end{equation}
\cref{fig:2DNonlin} depicts this dynamic system.

\begin{figure}[phtb!]
    \centering
    \captionsetup[subfigure]{justification=centering}
    \begin{subfigure}[t]{0.48\textwidth} 
\includegraphics[width=1\textwidth,valign = t]{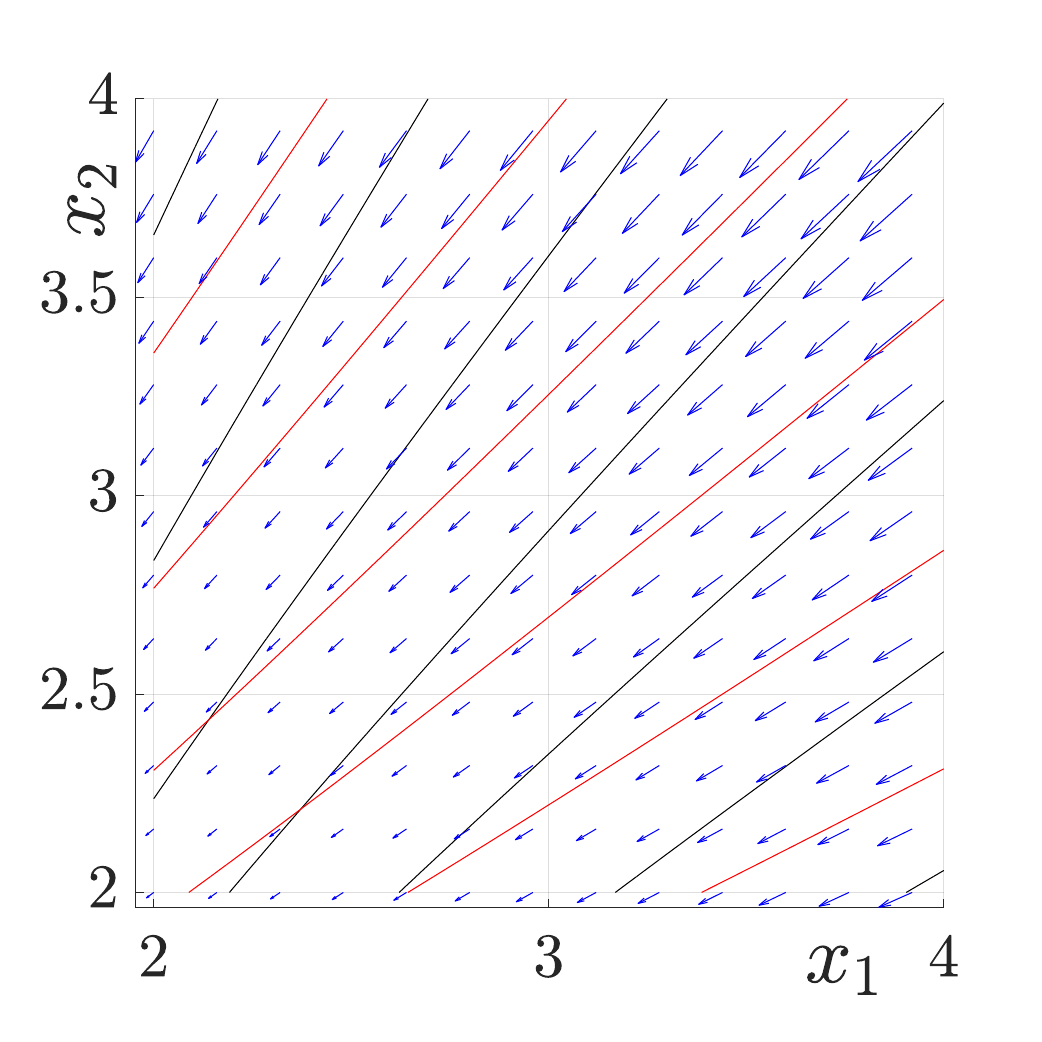}
        \label{subfig:NonlinearSysFBcon}
    \end{subfigure}
    \begin{subfigure}[t]{0.48\textwidth} 
\includegraphics[width=1\textwidth,valign = t]{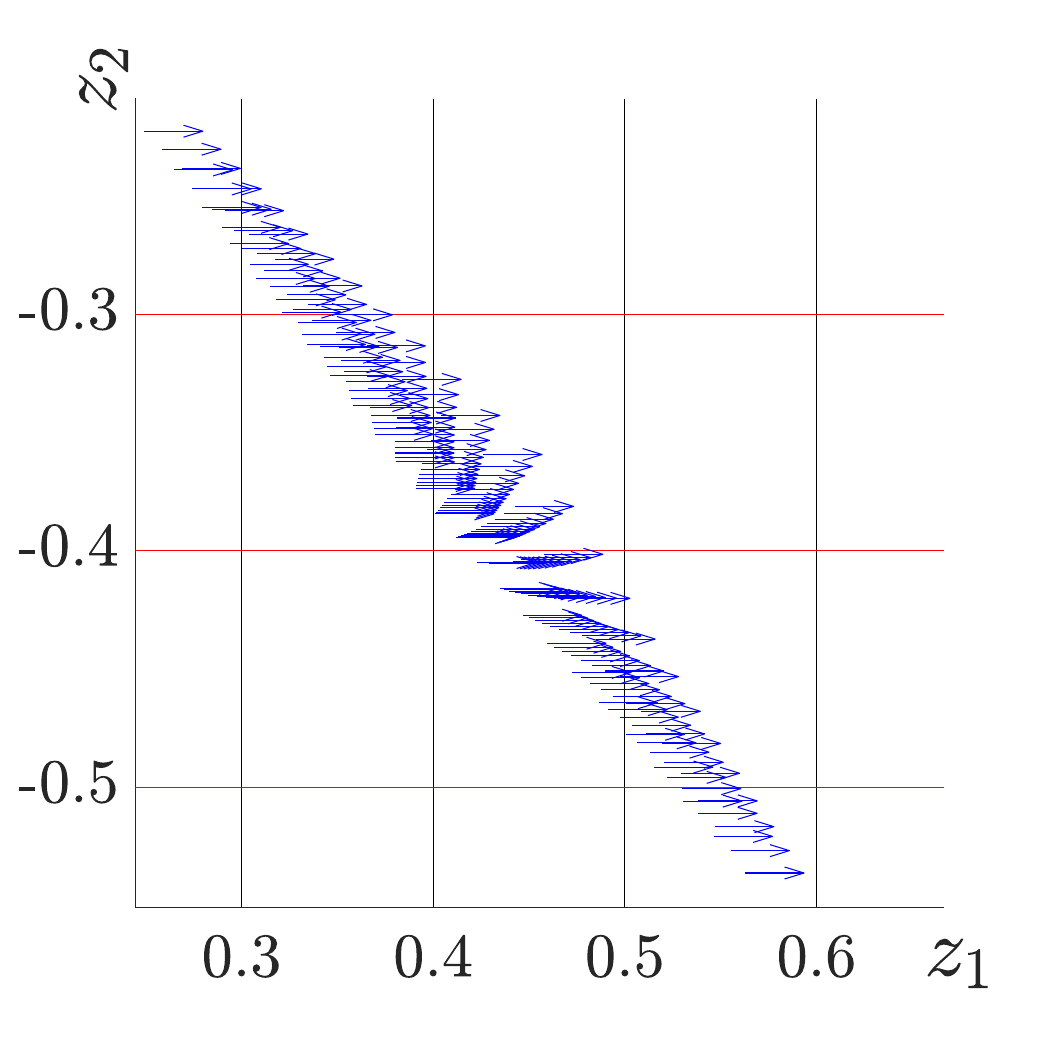}
        \label{subfig:NonlinearSysFB}
    \end{subfigure}\\
    \caption*{Analytic solution of the system, Eq. \eqref{eq:2DnonlinearDyn}, finding flowbox coordinates via a minimal set}
    \begin{subfigure}[t]{0.48\textwidth} 
\includegraphics[width=1\textwidth,valign = t]{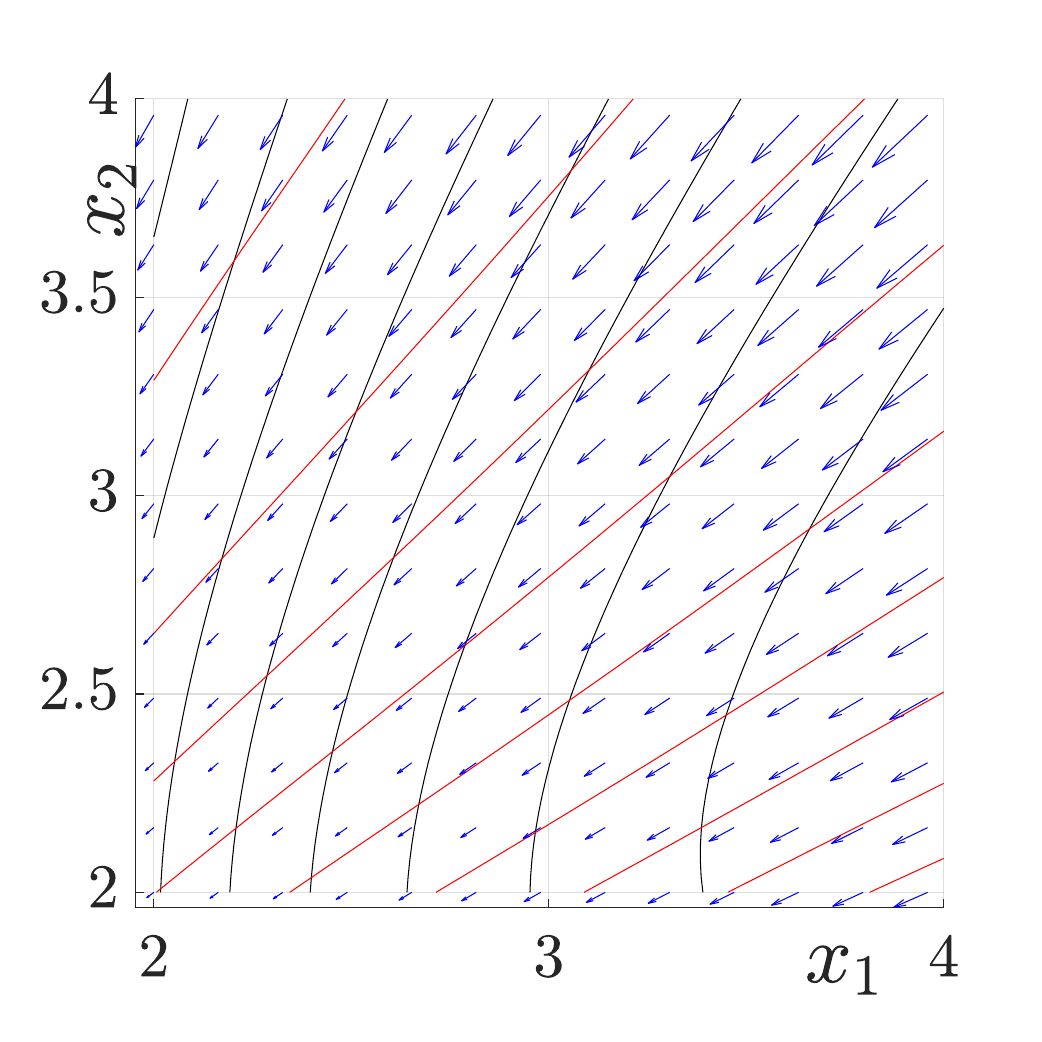}
        
        \label{subfig:NonlinSysFBconNum}
    \end{subfigure}
    \begin{subfigure}[t]{0.48\textwidth} 
\includegraphics[width=1\textwidth,valign = t]{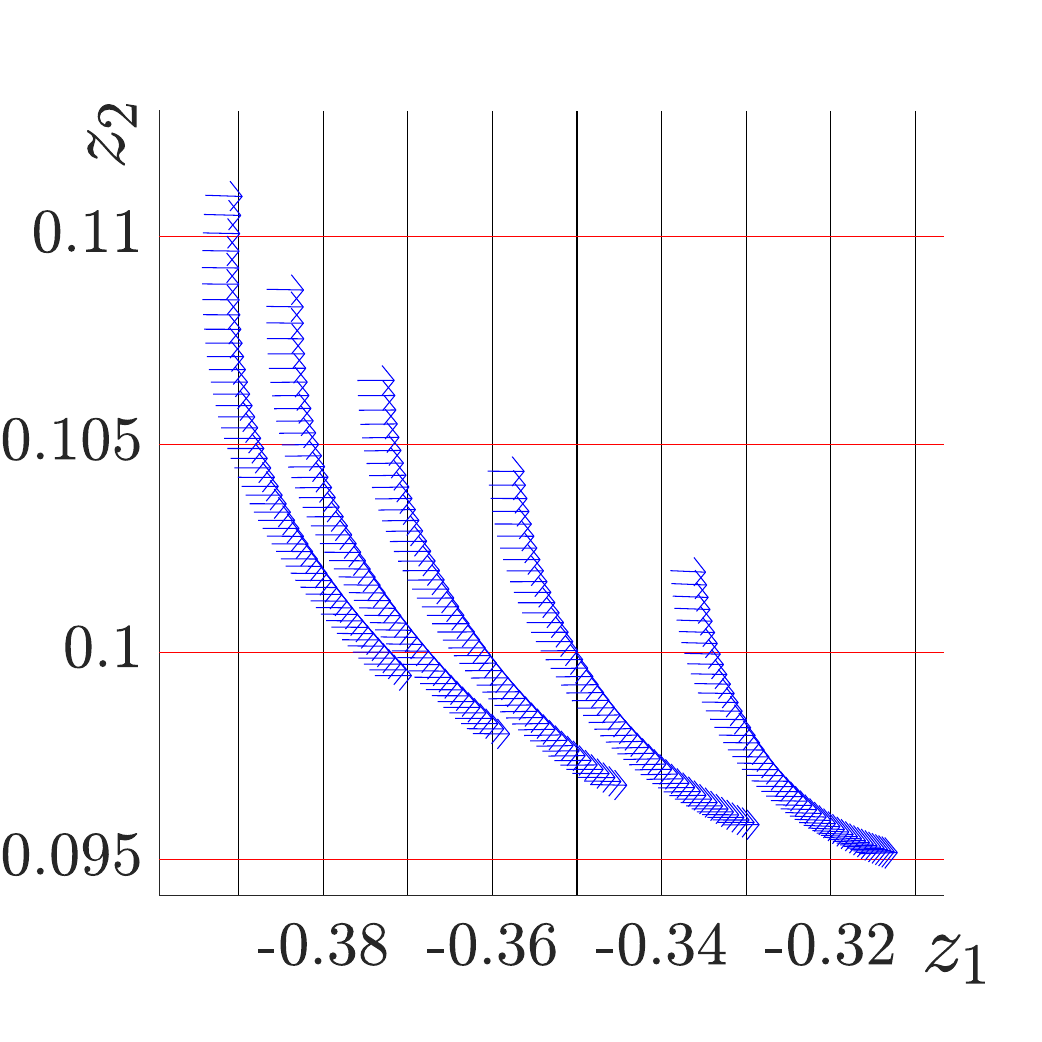}
        \label{subfig:NonlinSysFBNum}
    \end{subfigure}
    \caption*{Numeric generation of a minimal set and the corresponding flowbox coordinates}
    \caption{Minimal set and flowboxing of a nonlinear two-dimensional system}
    \label{fig:2DNonlin}
\end{figure}

\section{Discussion and conclusions}\label{sec:prosCons}
\subsection{Minimal set and Dimensionality reduction}
A minimal set (or unit velocity measurements) can be seen as a change of variables. This process is reversible since the Jacobian is a full-rank matrix. The minimal set is based on the unit manifolds not only at a certain dynamic's orbit but also in its neighborhood. For better understanding, let us reanalyze some of the examples above.

\paragraph{\bf{Limitations of Numerics}} The analytic solution of the system $A_R$ (real eigenvalues) starts with finding the eigenvectors and alignment the coordinate system accordingly. Then, axes rescaling lead the system to a unit velocity coordinate system $\bm{y}$. Careful looking at that solution reveals an inherent problem when the initial condition is proportional to an eigenvector. In this case, the dynamic in $\bm{y}$ coordinates deteriorates to a one-dimensional dynamic system. This singularity impacts when we try to generalize the NN result for different patches then the one it was trained on.

This NN was trained on the patch $[4,6]\times[1,3]$. \cref{fig:linearSysRealEVClear2,fig:2DLinRealwithFoli} respectively depict the results of the NN that is fed with two different patches $[5,7]\times [1,3]$ and $[2.5,3]\times [2.5,3]$. Whereas the result generalization on the patch $[5,7]\times [1,3]$ is satisfying\footnote{variance of the error of the flowbox with respect to $z_1$ axis is $1.5594e-04$ and with respect to $z_2$ axis is $2.9356e-06$}, the generalization on $\mathcal{D}\,=\,[2.5,3]\times [2.5,3]$ is far from being accurate. The singularity area is the line colored in cyan.

\OFF{The NN's result generalization on the patch $[5,7]\times [1,3]$ is satisfying. The variance of the error of the flowbox with respect to $z_1$ axis is $1.5594e-04$ and with respect to $z_2$ axis is $2.9356e-06$. On the other hand, as expected, the NN's result of the patch $[2.5,3]\times [2.5,3]$ is far from accurate since one of the solutions of the Koopman PDE is zero. This line and its corresponding curve in the NN result are colored in cyan. }

\begin{figure}[phtb!]
    \centering
    \captionsetup[subfigure]{justification=centering}
    \begin{subfigure}[t]{0.48\textwidth} 
\includegraphics[width=1\textwidth,valign = t]{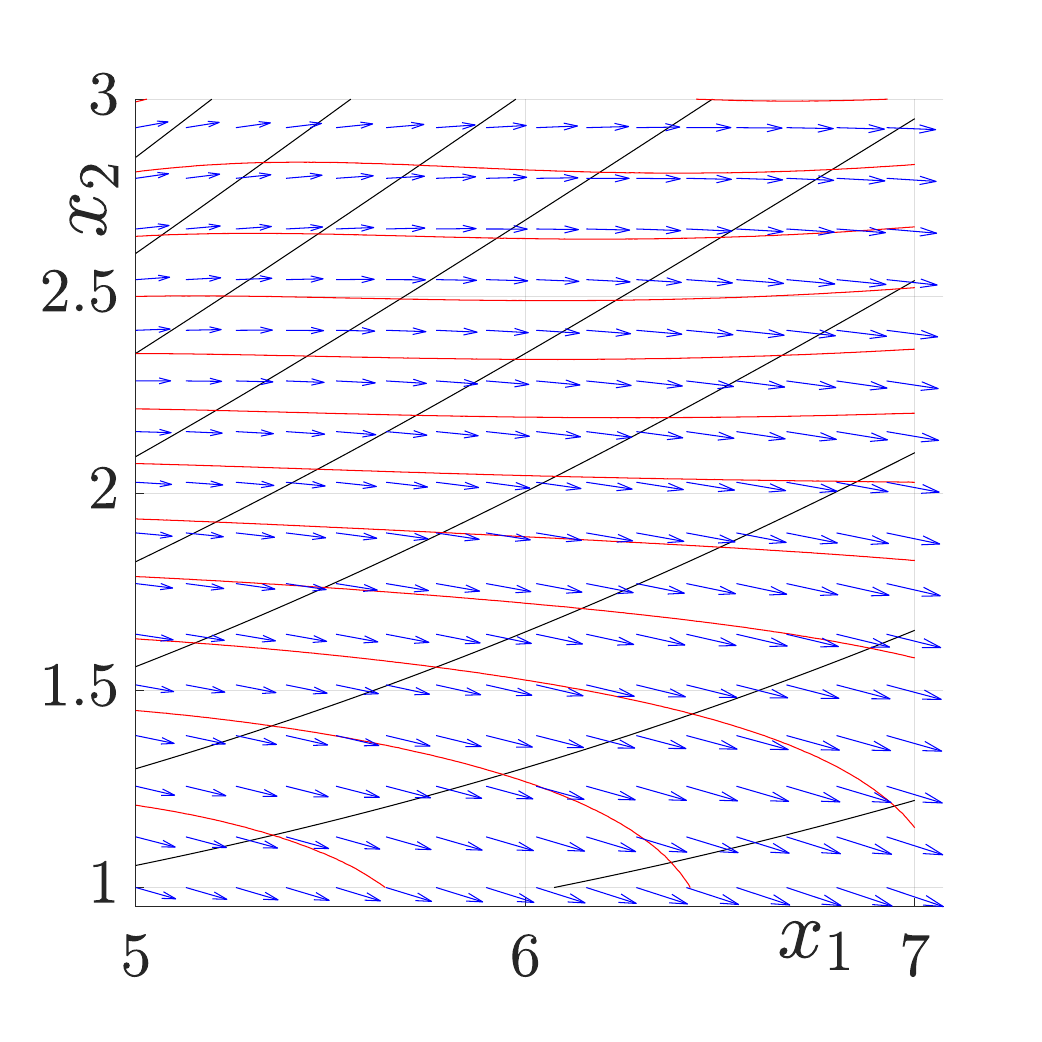}
        \label{subfig:linearSysRealEVFBconNumwithoutFoliationCleanRe2}
    \end{subfigure}
    \begin{subfigure}[t]{0.48\textwidth} 
\includegraphics[width=1\textwidth,valign = t]{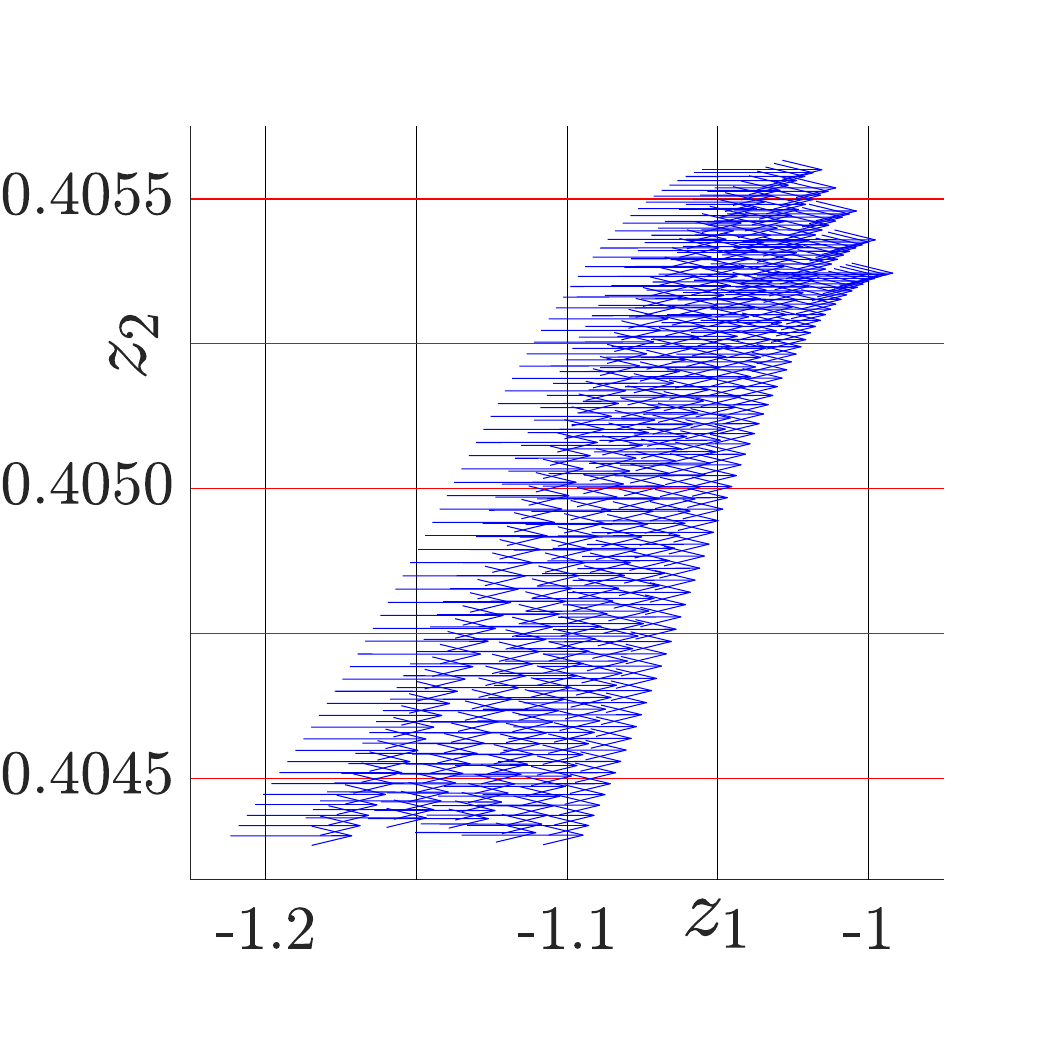}
        \label{subfig:linearSysRealEVFBNumwithoutFoliationCleanRe2}
    \end{subfigure}

    \caption{Examine the numeric solution of a linear system $A_R$ on the patches $[5,7]\times [1,3]$. }
    \label{fig:linearSysRealEVClear2} 
\end{figure}

\begin{figure}[phtb!]
    \centering
    \captionsetup[subfigure]{justification=centering}
    \begin{subfigure}[t]{0.48\textwidth} 
\includegraphics[width=1\textwidth,valign = t]{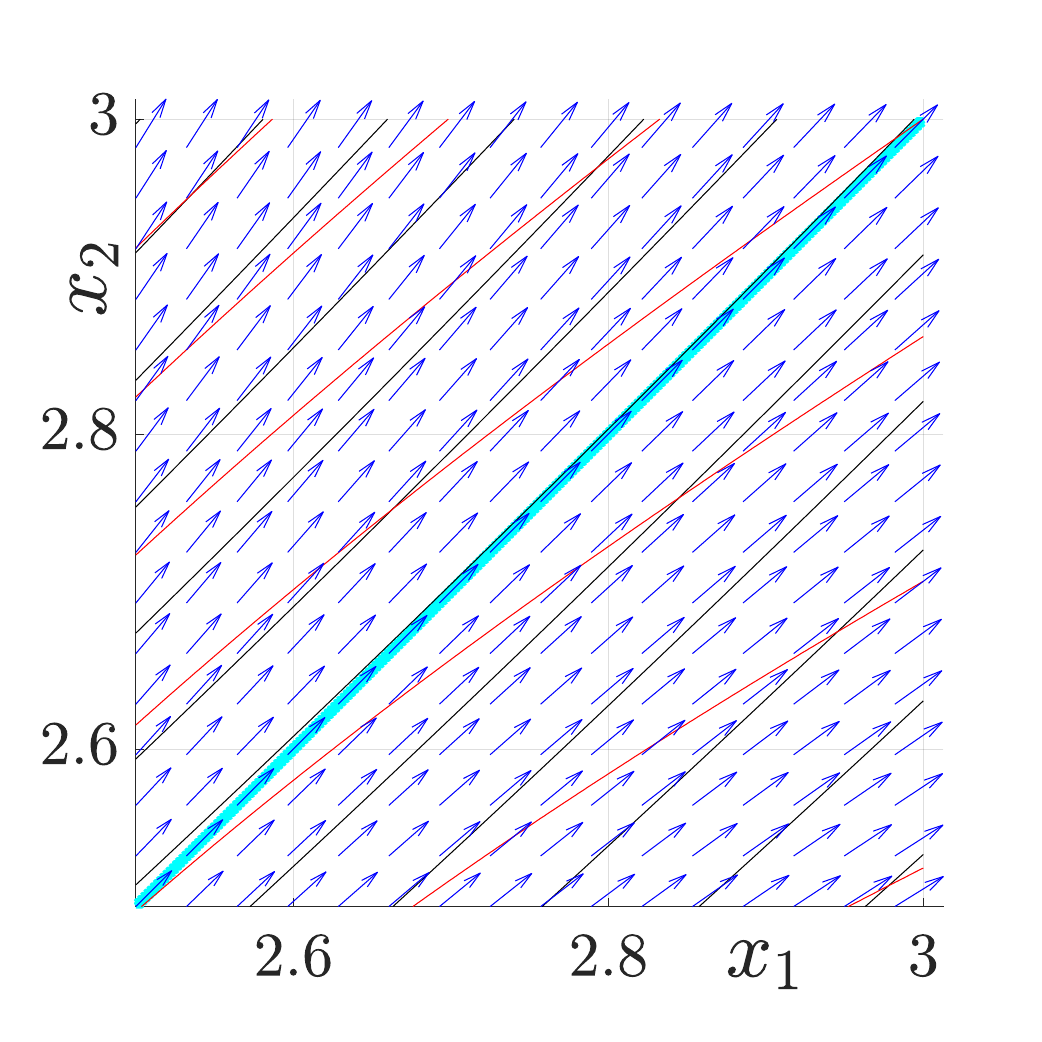}
        \label{subfig:linearSysRealEVFBconNumwithoutFoliation}
    \end{subfigure}
    \begin{subfigure}[t]{0.48\textwidth} 
\includegraphics[width=1\textwidth,valign = t]{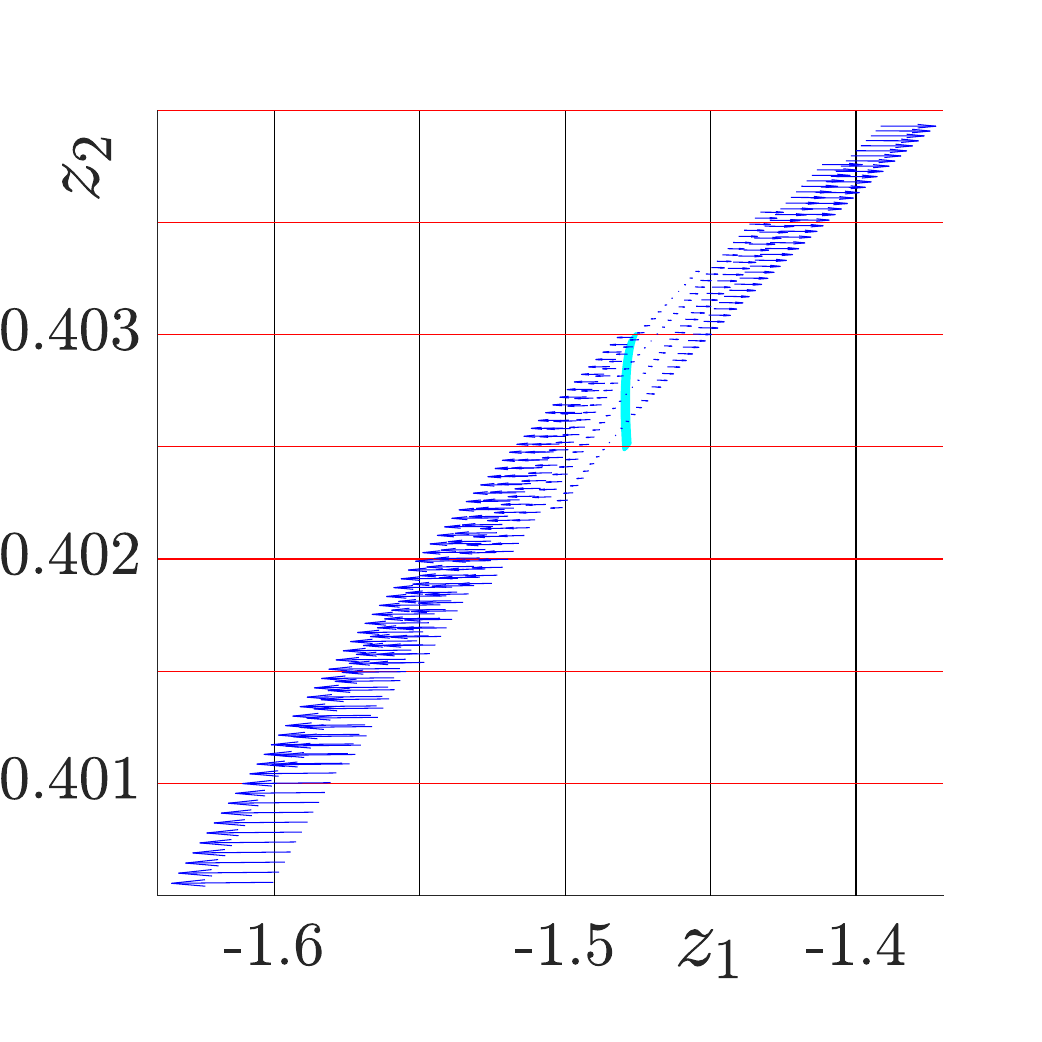}
        \label{subfig:linearSysRealEVFBNumwithoutFoliation}
    \end{subfigure}
    \caption{Examine the numeric solution of a linear system $A_R$ on the patch $[2.5,3]\times[2.5,3]$. The numeric solution where $x_1=x_2$ colored in cyan.}
    \label{fig:2DLinRealwithFoli}
\end{figure}



\subsection{Dynamic Recovery from Samples}
The method presented here is based on the given vector fields. However, flowboxing dynamic from samples includes also dynamic recovering. In that case, we face two main problems. The first problem is entailed by the sample density and the second is related to the diversity of the initial condition. Next, we discuss these two potential problems in detail.
 
\paragraph{\bf{Finding the unit velocity measurements from samples}}
Generally, finding a minimal set is equivalent to finding a full rank Jacobian matrix that deform the coordinate system such that the dynamic is linear. One of the ways to do that is by diffusion maps or its variants \cite{coifman2006diffusion, singer2008non, peterfreund2020local}. However, this method demands high-density sampled data to assure the deformation recovery. This high density is not very common in the dynamical system and more often the dynamic is sampled very sparsely in time and in the initial conditions. Thus, the way to overcome this obstacle is to find unit velocity measurements based on unit velocity measurements.

\section*{Acknowledgments} Ido wishes to acknowledge Prof. Guy Gilboa his support in this work. In addition, Ido is grateful to Mr. Meir Yossef Levi and Dr. Leah Bar for saving him tears and sweat in the NN encoding.

\appendix

\OFF{\section{Koopman Eigenfunction vs Koopman PDE's solution}\label{appsec:KEF_KM}
Let us consider the following dynamical system
\begin{equation}
    \begin{split}
        \dot{x}_1 &= x_1\\
        \dot{x}_2 & = -x_2 + x_1^2
    \end{split}.
\end{equation}
Given the initial condition $\bm{x}_0=\begin{bmatrix}
    x_{10}&x_{20}
\end{bmatrix}^T$, the solution is 
\begin{equation}
    \begin{split}
        x_1(t) &=x_{10}e^t\\
        x_2(t) &=\left(x_{20}-\frac{x_{10}^2}{3}\right)e^{-t}+\frac{x_{10}^2}{3}e^{2t}
    \end{split}.
\end{equation}
The solution when $\bm{x}_0=\begin{bmatrix}
    1&\dfrac{1}{3}
\end{bmatrix}^T$ is
\begin{equation}
    \begin{split}
        x_1(t) &=e^t\\
        x_2(t) &=\frac{1}{3}e^{2t}
    \end{split}.
\end{equation}
Obviously, $\varphi_1({\bm{x}})=x_1$ and $\varphi_2({\bm{x}})=x_2$ are \acp{KEF} where $\lambda_1=1,\, \lambda_2=2$. Even though $\varphi_2({\bm{x}})=x_2$ is continuous and smooth it does not admit Eq. \eqref{eq:KEFPDE}
\begin{equation}
    \begin{split}
        \nabla \varphi_2(\bm{x})^TP(\bm{x})&=\lambda_2 \varphi_2(\bm{x})\\
        \begin{bmatrix}
            0&1
        \end{bmatrix}\begin{bmatrix}
            x_1\\ -x_2 + x_1^2
        \end{bmatrix}&=-x_2 + x_1^2\ne 2x_2
    \end{split}
\end{equation}
unless $\bm{x}\in \mathcal{X}(1,1/3)$.

\section{Was in the previous version}

\begin{definition}[Unit manifold]\label{def:unitManifold}
A unit manifold is the graph of a unit velocity measurement. 
\end{definition}
I think it is not necessary.

\paragraph{\bf{From solutions of the Koopman PDE to unit velocity measurements and back.}} Let $\Phi$ be a function satisfying Eq. \eqref{eq:KEFPDE} for an eigenvalue $\lambda\ne 0$, so that $\Phi(\bm{x})\ne 0,\, \forall \bm{x}\in \mathcal{D}$, then 
\begin{equation}\label{eq:inducedUnitManifold}
    M(\bm{x})=\frac{1}{\lambda}\ln(\Phi(\bm{x}))
\end{equation}
is a unit velocity measurement. Namely, $M(\bm{x})$ admits Eq. \eqref{eq:unitPDE}
\begin{equation}
    \nabla M(\bm{x})^T P(\bm{x})=\frac{\nabla \Phi(\bm{x})^T P(\bm{x})}{\lambda\Phi(\bm{x})}= \frac{\lambda\Phi(\bm{x})}{\lambda\Phi(\bm{x})}=1.
\end{equation}
In order for this to be well defined, a specific branch should be chosen for the logarithmic function, as a function of the complex variable $\Phi(\bm{x})$.
Under the conditions that $\lambda \ne 0$ and that $\Phi(\bm{x})\ne 0, \, \forall \bm{x}\in \mathcal{D}$, a solution for Eq. \eqref{eq:KEFPDE} entails a solution for Eq. \eqref{eq:unitPDE} and vice versa.\\
Following Eq. \eqref{eq:inducedUnitManifold}, the gradients of $M(\bm{x})$ and $\Phi(\bm{x})$ satisfy the following:
\begin{equation}\label{eq:UnitKEF_Mani_Rel}
    \nabla \Phi(\bm{x}) = \lambda \Phi(\bm{x}) \nabla M(\bm{x}), \quad \forall \bm{x}\in \mathcal{D}.
\end{equation}
\begin{definition}[Time mappings]\label{def:timeMapping}
Let $\bm{x}(t)$ be a solution of the dynamical system initiated at $\bm{x}(t=0) = \bm{x}_0$ for $t\in I$. Given that the trajectory $\bm{x}(t)$ is regular and has no self intersections, the time mapping is defined as the inverse function from the state in the orbit $\mathcal{X}(\bm{x}_0)$ to the corresponding time point in the interval $I$, more formally, $\mu:\mathcal{X}(\bm{x}_0) \to I$ such that
\begin{equation}\label{eq:timeMapping}
    \mu(\bm{x}(t)) = t, \quad \forall \bm{x}\in \mathcal{X}(\bm{x}_0).
\end{equation}
\end{definition}
\paragraph{\bf{From unit velocity measurments to time mappings and back.}} If the subset $\mathcal{X}(\bm{x}_0)\subset\mathcal{D}$, 
then a time mapping is related to the unit velocity measurement $M(\bm{x})$ in $\mathcal{X}(\bm{x}_0)$ as
\begin{equation}
    \mu(\bm{x};\bm{x}_0) = M(\bm{x})-M(\bm{x}_0), \quad \forall \bm{x}\in \mathcal{X}(\bm{x}_0).
\end{equation}
This expression is a time mapping since the time derivative of $\mu(\bm{x};\bm{x}_0)$ is one and $\mu(\bm{x}_0;\bm{x}_0)=0$. Following the existence and uniqueness theorem $\mu(\bm{x};\bm{x}_0)$ admit Eq. \eqref{eq:timeMapping}.\\
\paragraph{\bf{From Koopman eigenfunctions to time mappings and back.}} Let $\varphi$ be a Koopman eigenfunction defined on $\mathcal{X}(\bm{x}_0)$. Assuming $\lambda\ne 0$, and choosing a branch for the complex logarithmic function, a time mapping, $\mu_{\varphi}(\bm{x};\bm{x}_0)$, induced from $\varphi$ is given by 
\begin{equation}\label{eq:inducedTM}
    \mu_{\varphi}(\bm{x};\bm{x}_0) = \frac{1}{\lambda}\left[\ln(\varphi(\bm{x}))-\ln(\varphi(\bm{x}_0))\right], \quad \forall \bm{x}\in \mathcal{X}(\bm{x}_0).
\end{equation}
This function is a time mapping since the time derivative of $\mu_{\varphi}(\bm{x};\bm{x}_0)$ is one and $\mu_{\varphi}(\bm{x}_0;\bm{x}_0)=0$.
\subsubsection{Koopman Conjugation}
Since projecting solutions of the Koopman PDE onto unit velocity measurements is not $1:1$ we will define the following conjugacy relation between different solutions of Koopman PDE.
\begin{definition}[Koopman PDE Conjugation]\label{def:KEFConjugationIdo}
We will say that two solutions of Koopman PDE. $\Phi_1(x), \Phi_2(x)$ associated with the eigenvalues $\lambda_1,\lambda_2$ are {\emph conjugate} if both are mapped via the complex logarithm function to the same unit velocity measurement $M(\bm{x})$ up to a constant for all $\bm{x}\in\mathcal{D}$
\begin{equation}\label{eq:conjugation}
\begin{split}
    \frac{1}{\lambda_1}\ln(\Phi_1(\bm{x})) &= \frac{1}{\lambda_2}\ln(\Phi_2(\bm{x})) +c
\end{split}.
\end{equation}
\end{definition} 
It is evident that the conjugacy relation just defined is an equivalence relation on the set of all solutions of the Koopman PDE. In addition, each two representatives of the same conjugacy class differ from each other by power and a coefficient.

Restricting Eq. \eqref{eq:conjugation} to an orbit yields the following conjugacy relation.
\begin{remark}[Koopman Eigenfunction Conjugation]
The same is true for the map from the set of Koopman eigenfunctions to the set of time mappings.
\end{remark} 
To recap, clear relations are shown between unit velocity measurement and solutions of Koopman PDE and between \ac{KEF}s and time mappings, summarized in Fig. \ref{fig:relationSummary}. Going the other way around, above each unit velocity measurement obtained by Eq. \eqref{eq:inducedUnitManifold} there exists a fiber composed of infinitely many solutions of the Koopman PDE that are in the same conjugacy class. Thus, the given unit velocity measurement can be lifted to one of these solutions arbitrarily chosen. 
\begin{figure}[phtb!]
    \centering 
    \includegraphics[trim=0 200 0 75, clip,width=1\textwidth,valign = t]{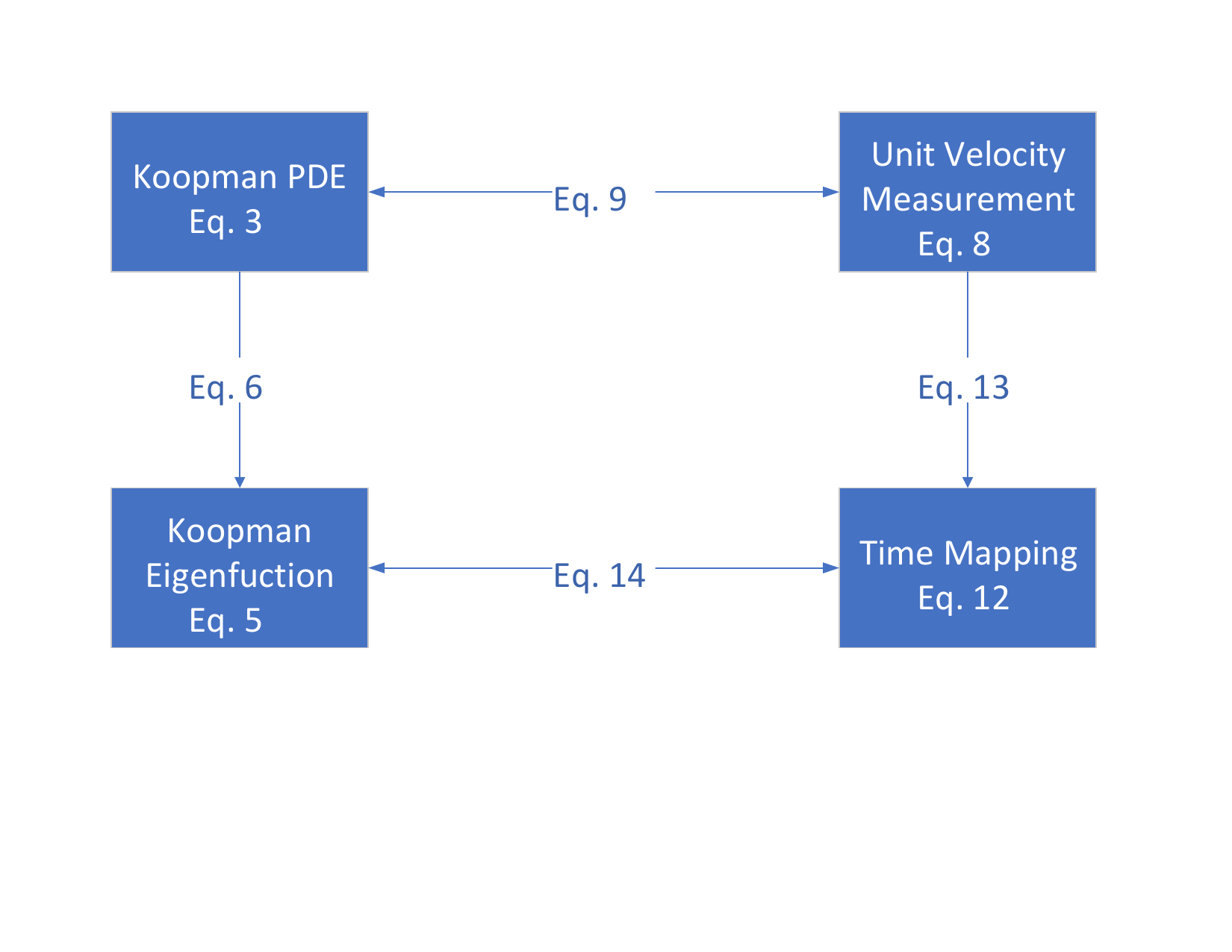}
    \caption{Relation Summary -- Koopman PDE, Koopman Eigenfunction, Unit velocity measurement, and time mapping.}
    \label{fig:relationSummary}
\end{figure}

\subsection{Was in the proof}
\begin{enumerate}
    \item Let $S$ be an $n-1$ dimensional surface in $\mathbb{R}^n$ which is transversal to the vector field $P$ and such that any solution of the ODE (\ref{eq:dynamics}) may intersect $S$ at most in one point. Let $D$ be a domain in $\mathbb{R}^{n-1}$ and $X:D\rightarrow \mathbb{R}^n$ be a parameterization of $S$. Here $s=(s_1, \ldots, s_{n-1})\in D$ and $X(s)\in S$ for any $s\in D$. 

    \item Let $h_i$ be the unique  solution of Cauchy problem $\nabla  h_i(x)^T P(x)=0$ satisfying $h_i(X(s))=s_i$ for any $s\in D$.
    \item Let $m$ be the unique solution of the Cauchy problem $\nabla  m(x)^T P(x)=1$ such that $m(x)=0$ for any $x\in S$. 
   \inote{I think it should be $x\in Im\{X\}$   *** but $Im\{X\}=S$ by definition... 
   ****}
\end{enumerate}

\subsection{Ido's proof to the theorem}
The solution is based on the method of characteristics (see for example \cite{arrigo2023introduction} chapter 2). The basic idea is to find a different coordinate system $(m(\bm{x}),\bm{h}(\bm{x}))=(m(\bm{x}),h_1(\bm{x}),\cdots, h_{N-1}(\bm{x}))$ such that the Koopman PDE becomes
\begin{equation}
    \Phi_m=\Phi.
\end{equation}
We demand that the mapping from $\bm{x}$ to $(m,\bm{h})$ is injective, therefore, the Jaacobian is invertible, namely $\det(J)\ne 0$, where
\begin{equation}
    J = \begin{bmatrix}
        \nabla m(\bm{x})\\
        \nabla h_1(\bm{x})\\
        \vdots\\
        \nabla h_{N-1}
    \end{bmatrix}.
\end{equation} 
In addition, we are looking for a coordinate system such that $m$ is the parameter of the characteristic curve and $\bm{h}$ are parameterizations of surface transversal to the characteristic curve. Thus, we demand that this coordinate system admits
\begin{equation}
    JP=\begin{bmatrix}
        1&0&\cdots&0
    \end{bmatrix}^T.
\end{equation}
There exist such coordinate system due to the existence and uniqueness theorem. Substitute the new coordinates in \cref{eq:KEFPDE} gives us
\begin{equation}
    \begin{split}
        \Phi &= \nabla \Phi^T P= \sum_{i=1}^N\Phi_{x_i}(\bm{x})p_i(\bm{x})=
        \sum_{i=1}^N \left(\Phi_m\frac{\partial m}{\partial x_i}+\sum_{j=1}^{N-1}\Phi_{h_j}\frac{\partial h_j}{\partial x_i}\right)p_i(\bm{x})\\
       &=\Phi_m\sum_{i=1}^N\frac{\partial m}{\partial x_i}p_i(\bm{x})+\sum_{j=1}^{N-1}\Phi_{h_j}\sum_{i=1}^N \frac{\partial h_j}{\partial x_i}p_i(\bm{x})\\
        &=\Phi_m 
        \underbrace{\nabla m^TP}_{=1} + \sum_{j=1}^{N-1}\Phi_{h_j}\underbrace{\nabla h_j^T P}_{=0}=
        \Phi_m
    \end{split}
\end{equation}
Solving this simple ODE yields \eqref{eq:generalSolutionKPDE}.
\section{Algebraic-Differential structure on the set of \ac{KEF}s}
In this section a new mathematical structure on the set of unit velocity measurements and on the set of \acp{KEF}s is defined. This structure will further induce a structure on the set of solutions of the Koopman PDE.
The first step is to rigorously define the set of operations under which the infinite sets of \acp{KEF} and solutions of Koopman PDE are closed. In order to do so, time mappings and unit velocity measurements, defined above, will serve as a bypass. Describing the set of allowed operations under which the set of time mappings is closed is actually straightforward and is given by the following.

\begin{definition}[Admissible shift of unit velocity measurements]
Let $\{M_i\}_{i=1}^K$ be any finite set of unit velocity measurements from $\mathcal{D}\subset \mathbb{R}^N$ to $\mathcal{M}_i\subseteq \mathbb{C}$. Let $f$ be an analytic function from $\mathcal{M}_1\times \cdots\times \mathcal{M}_K$ to $\mathbb{C}$, and let its partial derivatives be denoted by $\{\partial_i f\}_{i=1}^K$. The function $f$ is an admissible shift on the set $\{M_i\}_{i=1}^K$ if the following relation holds:
\begin{equation}\label{eq:legalActionUM}
    \nabla f^T(\bm{M}(\bm{x})) P(\bm{x}) =1, \quad \forall \bm{x}\in \mathcal{D},
\end{equation}
where $\bm{M}(\bm{x})=\begin{bmatrix}
    M_1(\bm{x})&\cdots&M_K(\bm{x})
\end{bmatrix}$.
One can reformulate this condition by using the chain rule
\begin{equation}
    \nabla f P(\bm{x})=\sum_{i=1}^K \partial_i f \underbrace{\nabla M_i(\bm{x})^TP(\bm{x})}_{=1, \,\, Eq. \eqref{eq:unitPDE}} = \sum_{i=1}^K \partial_i f = 1.
\end{equation}
\end{definition}

\begin{definition}[Admissible time shift]\label{def:legalActions}
Let $\{\mu_i\}_{i=1}^K$ be any finite set of time mappings from an orbit $\mathcal{X}(\bm{x}_0)$ to the time interval $I$. Let $f$ be a differentiable function from $I^K$ to $I$, and $\{\partial_i f\}_{i=1}^K$ denote its partial derivatives. The function $f$ is an admissible time shift (acting on the set $\{\mu_i\}_{i=1}^K$ and the result is a time mapping) if it admits the following relation
\begin{equation}\label{eq:legalActionTM}
    \frac{df}{dt}=\sum_{i=1}^K\partial_i f = 1,
\end{equation}
at every point on the main diagonal of $I^K$, i.e. along the line $\mu_i=\mu_j, \, 1\leq i,j\leq K$, $\mu_i\in I,\, 1\leq i \leq K$, and $f(\bm{0})=0$ where $\bm{0}$ is a $K$ dimensional zero vector.
\end{definition}

One can see an admissible shift as any manipulation of time mappings that keeps the "physical" unit as "time". For example, let $\varphi$ and $\vartheta$ be \acp{KEF} where the corresponding eigenvalue $1$, and let  $\mu_{\varphi}$ and $\mu_{\vartheta}$ be the corresponding time mappings (Eq. \eqref{eq:inducedTM}). One can generate different time mappings as $\tilde{\mu}=\frac{\mu_{\varphi}+\mu_{\vartheta}}{2}$ or  $\hat{\mu} = \sqrt{\mu_{\varphi} \cdot \mu_{\vartheta}}$. Clearly, $\tilde{\mu}$ and $\hat{\mu}$ are time mappings since there time derivatives are $1$ and $\tilde{g}(\bm{x}_0)=\hat{g}(\bm{x}_0)=0$. In addition, these time mappings are induced from the \acp{KEF} $\varpi=\sqrt{\varphi\cdot \vartheta}$ and $\varrho = \exp\{\sqrt{\left(\ln\varphi-\ln\varphi_0\right)\left(\ln\vartheta-\ln\vartheta_0\right)}\}$, respectively. 

\todo[inline]{Ido's tasks: Rewrite the proof\\
discuss the condition for them the conservation laws are smooth.\\
discuss the connection to Cauchy Surfaces} 

\section{Minimal set}
Based on the algebraic-differential structure defined above, we prove there is a finite set of solutions of Koopman PDE that generates the whole space of these solutions. As a result, it is possible also to define a minimal generating set. Using the linear algebra simile, this finite set can be described as a "basis" of this space. Due to the projections of solutions of Koopman PDE onto unit velocity measurements, and of Koopman eigenfunctions on time mappings, and due to the conjugacy relations previously defined it suffices to prove the existence of a ``basis'' for unit velocity measurements. The proof for time mappings is identical, and then for solutions of the Koopman PDE, and Koopman eigenfunction one can use pullback arguments to obtain a ``basis'' for the set of conjugacy classes.

\subsection{Minimal Set }
\subsubsection{Unit Velocity Measurements -- Generating, Independence, Minimal Set}

\begin{definition}[Generating set]\label{def:generatingSetTM}
Let $\{M_i\}_{i=1}^K$ be a set of unit velocity measurements. This set is called a generating set for the entire set of unit velocity measurements if any unit velocity measurement $M$ can be presented as some admissible shift (as in Definition \ref{def:legalActions}) acting on this set.
\end{definition}

\begin{definition}[Generated set]\label{def:generatedSetTM}
Let $\{M_i\}_{i=1}^K$ be a set of unit velocity measurements then its generated set ,$\mathcal{G}(\{M_i\}_{i=1}^K)$, is the set of all the unit velocity measurements spanned by $\{M_i\}_{i=1}^K$ under the actions in Definition \ref{def:legalActions}.
\end{definition}

\begin{definition}[Geometric independence]\label{def:independentTM2}  
Let $\{M_i\}_{i=1}^K$ be a set of unit velocity measurements. This set is independent if the set of vectors $\{\nabla M_i\}_{i=1}^K$ is linearly independent for all $\bm{x}\in \mathcal{D}$.
\end{definition}

\begin{definition}[Algebraic--Deferential independence]\label{def:independentTM1} 
Let $\{M_i\}_{i=1}^K$ be a set of unit velocity measurements. This set is non-degenerated if $M_j\notin \mathcal{G}(\{M_i\}_{i=1,i\ne j}^M)$ for all $j=1,\cdots, K$.
\end{definition}

\begin{proposition} \label{prop:equivalence} 
Definitions \ref{def:independentTM2} and \ref{def:independentTM1} are equivalent.
\end{proposition}
\begin{proof} Let $\{M_i\}_{i=1}^K$ be a degenerated set, i.e. there is $M_j$ in this set, and an admissible shift $f$, such that $M_j = f(\{M_i\}_{i=1,i\ne j}^K)$. Using the chain rule, one can formulate the gradient of $M_j$ as
\begin{equation}
    \nabla M_j(\bm{x}) =  \sum_{i=1,i\ne j}^K \partial_i f \nabla M_i(\bm{x}).
\end{equation}
Hence, the set $\{M_i\}_{i=1}^K$ is a dependent set according to Definition \ref{def:independentTM2}.

Next, let $\{M_i\}_{i=1}^K$ be a dependent set according to Definition \ref{def:independentTM2}, i.e. there is a unit velocity measurement $M_j(\bm{x})$ for which the following holds:
\begin{equation}
    \nabla M_j(\bm{x})=\sum_{i=1,i\ne j}^{K} a_i(\bm{x}) \nabla M_i(\bm{x}).
\end{equation}
According to the existence and uniqueness theorem, there exist a function, $f$ such that
\begin{equation}\label{eq:fDependent}
    \nabla_M f = \begin{bmatrix}a_1(\bm{x})&\cdots&a_{j-1}(\bm{x})&a_{j+1}(\bm{x})&\cdots&a_K(\bm{x})\end{bmatrix}^T,
\end{equation}
with the initial condition 
\begin{equation}
    M_j(\bm{x}_0)=f(\{M_i\}_{i=1,i\ne j}^K(\bm{x}_0)).
\end{equation}
Therefore, the set $\{M_i\}_{i=1}^K$ is a degenerated set according to Definition \ref{def:independentTM1}.
Note, the function $f$ from Eq. \eqref{eq:fDependent} is an admissible shift in term of Definition \ref{def:legalActions} since $M_j$ is a unit velocity measurement.
\end{proof}

 \begin{proposition}\label{prop:Maximal cardinality}{\bf{(Maximal cardinality of a set of independent unit velocity measurements)}} 
 Any $N+1$ unit velocity measurements are dependent.
\end{proposition}
\begin{proof}
    Let $\{M_i\}_{i=1}^{N+1}$ be a set of $N+1$ unit velocity measurements. Conversely, assume this set is independent. According to \cref{def:independentTM2} the set $\{\nabla M_i\}_{i=1}^{N+1}$ is linearly independent which is impossible since $\nabla M_i$ is an $N$ dimensional vector. Therefore, the cardinality of the largest independent unit velocity measurement set is up to $N$.
\end{proof}

\begin{definition}[Maximal independent set]\label{def:maximalIndependentTM} Let $\mathcal{G}$ be an infinite set of unit velocity measurements. Let $G = \{M_i(\bm{x})\}_{i=1}^K$ be an independent set of unit velocity measurements, where $G\subset\mathcal{G}$. It is called a maximal independent set if any set $\hat{G}$ which strictly contains $G$ is dependent.
\end{definition}

\begin{definition}[Minimal generating set]\label{def:minGeneratingSet} Let $\mathcal{G}$ be an infinite set of unit velocity measurements. A generating set of $\mathcal{G}$ is a minimal if any strict subset of it does not generate $\mathcal{G}$.
\end{definition}

\begin{theorem}[Minimal generating set and maximal independent set]\label{theo:minimal} Let $\mathcal{G}$ be the set of all unit velocity measurements of the dynamic $P$. Any maximal independent set of $\mathcal{G}$ is also a minimal generating set of $\mathcal{G}$ and vice versa.
\end{theorem}
\begin{proof}
    Let $G$ be a maximal independent set of $\mathcal{G}$. Let $\mathcal{H}$ be the generated set of $G$. We would like to show that $\mathcal{H}=\mathcal{G}$. Obviously $\mathcal{H} \subseteq \mathcal{G}$. Suppose there is some $M$ in $\mathcal{G}$ and not in $\mathcal{H}$ then $\{M,G\}$ is independent, according to \cref{def:independentTM1} and \cref{prop:equivalence}. This contradicts the fact that $G$ is a maximal independent set. Therefore, $G$ is a generating set of $\mathcal{G}$. Now we are left to prove that all elements in $G$ are essential to generate $\mathcal{G}$, however, 
    it is clear since $G$ is independent. 

    Now, let $G$ be a minimal generating set of $\mathcal{G}$. According to either Definition \ref{def:independentTM2} or Definition \ref{def:independentTM1} $G$ must be independent otherwise it is not minimal. In addition, for any function $M$ in $\mathcal{G}$ the set $\{M, G\}$ is dependent because $G$ itself already generates $\mathcal{G}$. Hence, $G$ is a maximal indendent set.
\end{proof}

\begin{corollary}{\bf{(Finite Cardinality of Generating Set of Koopman PDE Solution)}}
    The cardinality of a generating set of unit velocity measurements is finite. Then, also the cardinality of the set of conjugacy classes of solutions of the Koopman PDE is finite. Thus, if we limit our discussion to the Koopman Eigenfunction set which are restrictions of the solutions of Koopman PDE, the set also has a generating set with finite cardinality. 
\end{corollary}

By \cref{prop:Maximal cardinality} the cardinality of a minimal set is finite and does not exceed the dimensionality of the system. In the rest of this paper, it is assumed that the cardinality is maximal. The discussion about the conditions under which the cardinality is maximal exceeds the frame of this paper, however, there is a reference to that in the numerical results. Such a minimal set induces a new coordinate system for which the velocity equals $1$ in each coordinate. Upon the condition of independence, the new dynamic is called \emph{canonical split dynamic} if the gradients are all orthogonal to each other, defined as follows.

\begin{definition}[Canonical split dynamic]\label{def:canonicalSplitDynamic} Let $\{M_i\}_{i=1}^N$ be a minimal set where $\{\nabla M_i\}_{i=1}^N$ is an orthogonal set for all $\bm{x}\in \mathcal{D}$. A canonical dynamic splitting is the dynamic represented in the coordinate system $\hat{y}_i=M_i(\bm{x})$. Then, the dynamic can be reformulated as
    \begin{equation}\label{eq:canonicalDynamicSplittingApp}
    \begin{split}
        \dot{\hat{y}}_1&=1\\
        \quad\vdots\\
        \dot{\hat{y}}_N&=1
    \end{split}.
\end{equation}
Obviously, $\det\left(J(\hat{\bm{y}})\right)\ne 0$ for all $\bm{x}\in\mathcal{D}$.
\end{definition}

The word "canonical" stands for the independence between the coordinates, for the unit velocity for each coordinate, and for the orthogonality of the gradients. One can get a split dynamic only by independence between the coordinates and under the condition that $\det\left(J(\hat{\bm{y}})\right)\ne 0$ for all $\bm{x}\in\mathcal{D}$. Note, that the canonical split dynamic is not unique. Unfortunately, the discussion about the conditions under which this dynamic decomposition exists exceeds the frame of this paper.

Recall that the flowbox theorem states that given a Lipschitz vector field, there is an invertible transformation from a  neighborhood of a point, that is far a way from a singularity of the system, to a coordinate system for which the vector field is trivial, i.e. unit velocity in one coordinate and zero in the rest  \cite{calcaterraboldt2008flowbox}. As a result from \cref{def:canonicalSplitDynamic}, a flowboxed coordinate system is a rotation and rescaling of a coordinate system induced from a minimal set. Thus, a minimal set leads to a flowboxed coordinate system and vice versa.

\subsection{Mathematical structure induced on the set of \acp{KEF}}
\paragraph{\bf{Multiplicity of Koopman Eigenfunctions}}
\REPHRASE{The set of Koopman eigenfunctions is closed under a variety of operations. For instance, given a \ac{KEF} $\varphi$ that is everywhere non-zero, with some eigenvalue $\lambda \ne 0$, one can generate a \ac{KEF} from $\varphi$ with any other eigenvalue, since $(\varphi)^\beta$ is also a \ac{KEF} for all $\beta$. In addition, if $\varphi_1(\bm{x})$ and $\varphi_2(\bm{x})$ are \acp{KEF} then $\varphi_3(\bm{x})=\varphi_1(\bm{x})\varphi_2(\bm{x})$ is also one. Hence, it is closed under this operation. Furthermore, point-wise multiplication of \acp{KEF} is associative, and the constant function $1$ serves as a unit element with respect to this operation (it is an eigenfunction of $\lambda = 0$), and for every eigenfunction, $e^{\lambda t}$ the eigenfunction $e^{-\lambda t}$ is the inverse element. As a result the set of \acp{KEF}s has an abelian group structure with respect to point-wise multiplication. In fact, this set of is closed also under other types of point-wise operations, yet no full characterization of these operations had been made.}
\paragraph{\bf{Algebraic-differential structure}}
\REPHRASE{In the sequel, a definition of an algebraic-differential structure of the set of \ac{KEF}s is suggested. The newly suggested structure calls for a new point of view on the set of Koopman eigenfunctions that is different from the common viewpoint which studies the spectrum of the Koopman operator. 
Thus, the focus is moved from the Koopman spectrum to the ability to generate the set of solutions of the Koopman PDE in some appropriate sense that will be defined in Section $5$. Special interest will be taken in small generating sets, minimal such sets in particular. The main merit of this result is that a minimal generating set can serve as a new coordinate system in which the dynamic is linear, and the transformation from the coordinate system to the linear one is invertible.}

\begin{description}
    \item[Koopman Manifold] Under the assumptions of differentiable dynamics the graph of a solution of Eq. \eqref{eq:KEFPDE} can be regarded as a manifold in $\mathbb{R}^{N}\times \mathbb{C}$. We term this manifold as \emph{Koopman Manifold}. 

\item[\acl{KEF}] Assuming the initial condition $\bm{x}_0$, a measurement $\varphi(\bm{x})$, satisfying the following relation along the orbit $\mathcal{X}(\bm{x}_0)$
\begin{equation}\label{eq:KEFdiff}
    \dfrac{d\varphi (\bm{x})}{dt}=\lambda \varphi(\bm{x}), \quad \forall \bm{x}\in \mathcal{X}(\bm{x}_0)
\end{equation}
for some value $\lambda\in \mathbb{C}$, is a \acf{KEF}.
According to the existence and uniqueness theorem, $\varphi(\bm{x})$ is given along the orbit of $x_0$ by:
\begin{equation}\label{eq:KEFform}
    \varphi(\bm{x}(t)) = \varphi(\bm{x}_0)e^{\lambda t}
\end{equation}
Existence of Koopman Eigenfunction is thoroughly discussed in \cite{cohen_gilboa_2023}. 
Koopman eigen functions and solutions of the Koopman \ac{PDE} are related by the following way:
Let $\Phi(\bm{x})$ be a solution of Koopman PDE, and let $\mathcal{X}(\bm{x}_0)$ be an orbit of the dynamic initiated at $\bm{x}_0$, and contained in $\mathcal{D}$. Then, one can derive a Koopman eigenfunction $\varphi(\bm{x})$ from $\Phi(\bm{x})$ simply by
\begin{equation}
    \varphi(\bm{x}) = \Phi(\bm{x}), \quad \forall \bm{x}\in \mathcal{X}(\bm{x}_0).
\end{equation}
That is, $\varphi$ is the restriction of $\Phi$ along the orbit of $x_0$. Note however, that a \ac{KEF} need not necessarily be a restriction of a solution of the Koopman PDE. An example of a Koopman eigenfunction that is not such a restriction is given in \cref{appsec:KEF_KM}. 
\end{description}}}

\bibliographystyle{amsplain}
\bibliography{smartPeople}





\end{document}